\def\specialsection{\@startsection{section}{1}%
  \z@{\linespacing\@plus\linespacing}{.5\linespacing}%
  {\normalfont}}
\def\section{\@startsection{section}{1}%
  \z@{.7\linespacing\@plus\linespacing}{.5\linespacing}%
  {\normalfont\scshape}}
\theoremstyle{plain}
\newtheorem{thrm}{Theorem}[section]
\newtheorem{lemma}[thrm]{Lemma}
\newtheorem{cor}[thrm]{Corollary}
\newtheorem{rmrk}[thrm]{Remark}
\newtheorem{dfn}[thrm]{Definition}
\numberwithin{equation}{section}
\begin{document}
\newcommand{\SL}{\mathcal L^{1,p}( D)}
\newcommand{\Lp}{L^p( Dega)}
\newcommand{\CO}{C^\infty_0( \Omega)}
\newcommand{\Rn}{\mathbb R^n}
\newcommand{\Rm}{\mathbb R^m}
\newcommand{\R}{\mathbb R}
\newcommand{\Om}{\Omega}
\newcommand{\Hn}{\mathbb H^n}
\newcommand{\aB}{\alpha B}
\newcommand{\eps}{\epsilon}
\newcommand{\BVX}{BV_X(\Omega)}
\newcommand{\p}{\partial}
\newcommand{\IO}{\int_\Omega}
\newcommand{\bG}{\boldsymbol{G}}
\newcommand{\bg}{\mathfrak g}
\newcommand{\Bux}{\mbox{Box}}
\title[Variational solution to the Neumann problem]{Existence and uniqueness of variational solution to the Neumann problem for the $p^{th}$ Sub-Laplacian associated to a system of H\"ormander vector fields}

\author{Duy-Minh Nhieu}
\address{Department of Mathematics \\
National Central University \\
Zhongli District, Taoyuan City 32001, \\
Taiwan, R.O.C.}
 \email[Duy-Minh Nhieu]{dmnhieu@math.ncu.edu.tw}
\thanks{Supported by the Ministry of Science and Technology, R.O.C., Grant MOST 106-2115-M-008-013}

\begin{abstract}
We establish the existence and uniqueness of variational solution to the nonlinear Neumann boundary problem for the $p^{th}$-Sub-Laplacian associated to a system of H\"ormander vector fields
\end{abstract}

\maketitle

\section{\textbf{Introduction}}

Two of the fundamental boundary value problems in PDE are the Dirchlet and Neumann problems.  They have profound influences in the development of the theory of PDE.  Aside from their theoretical applications, they describe many physical models.  In the classical setting, these two problems have been advanced to a far extend.  We would not attempt to give a bibliography here since we would inadvertently omit some of the important ones.   Boundary value problems for Sub-elliptic operators were initiated by the pioneering works of Kohn and Nirenberg, Bony, Gaveau and complemented later by two important works of Jerison.  These works established some fundamental aspects of subelliptic equations such as the Harnack inequality , smoothness of the Green's function up to the boundary, for boundaries that are nowhere characteristic with respect to the operator \cite{KN},\cite{B}, explicit formula for the harmonic measure at the center of the Koranyi ball in the Heisenberg group $\Hn$ \cite{G} and the behavior of solutions to the Dirichlet problem near characteristic points \cite{J1,J2}.  Of course, we only highlight those that are closely related to our development in this paper.  For other early relevant results, the reader can see \cite{OL1,OL2,Ba,De}.  These pioneering works subsequently drawed a significance of attention to boundary value problems in the subelliptic setting, see for instance the works \cite{D1},\cite{D2}, \cite{HH}, \cite{Ci}, \cite{Da}, \cite{UL}, \cite{CG}, \cite{GV}, \cite{CGN1}, \cite{CGN2}, \cite{GNg} 
and the references therein.  However, none of the works cited above addressed the Neumann problem.  

Our purpose  is to establish existence and uniqueness of variational solutions of the Neumann problem associated to the non-linear Sub-elliptic $p^{th}$-Laplacian arising from a system of H\"ormander vector fields.   This is a continuation and extension of the works \cite{N} and \cite{DGN}.

To set the stage we fix an open set $U\subset \mathbb R^n$ with $diam(U) < \infty$  and $X = (X_1,...,X_m)$ a system of vector fields with smooth coefficients satisfying H\"ormander's finite rank condition at every point $x\in U$:

\[
rankLie[X_1,...,X_m] \ \equiv \ n
\]
Fix $1 \leq p < \infty$.  Let $\mathcal L_p$ be the $p^{th}$-Sub-Laplacian associated to the system of vector fields $X = (X_1,...,X_m)$ defined on smooth functions $u$ by

\[
\mathcal L_p\,u \ =\ -\, \sum_{j=1}^m X_j^*(|Xu|^{p-2}X_ju)
\]
on $\Rn$. In the above  the $X_j = \sum_{k=1}^n b^{(j)}_k(x) \frac{\partial}{\partial x_k}$ are smooth vector fields on $\Rn$ (satisfying H\"ormander's finite rank condition though such an assumption is not necessary to introduce the operator) and $X^*_j$ is the formal adjoint of $X_j$ given by
$X^*_j g(x) = - \sum_{k=1}^n \frac{\partial}{\partial x_k}(b^{(j)}_k(x) g(x))$.  We have also indicate by $|Xu| = \left(\sum_{j = 1}^m (X_ju)^2\right)^\frac{1}{2}$.  For a smooth (at least $C^2$) domain, the classical Neumann problem for $\mathcal L_p$ consists of seeking a function $u$ that can be differentiated enough times and satisfying

\begin{equation}\label{E:NP}
\begin{cases}
\mathcal L_p u \ =\ f \quad\text{in} \ \Om,\\
\sum_{j=1}^m  |Xu|^{p-2} (X_j \cdot \eta) \,X_ju\ =\ \nu\quad\text{on }\quad\partial{\Om}
\end{cases}
\end{equation}
where $\eta$ is the unit outer (Euclidean) normal of $\partial\Om$, $\nu$ and $f$ are continuous functions satisfy the compatibility condition

\begin{equation}\label{E:Compatibility}
<\nu,1>\ =\ \int_\Om f\,dx\ .
\end{equation}
When $\nu=g$ for some function $g$, then (\ref{E:Compatibility}) simply becomes
\[
\int_{\partial \Om} g\ d\mu \ = \ \int_\Om f\,dx\ .
\]

In the above, the notation $<,>$ indicated the pairing between an element $x\in Z$ and $\nu \in Z^*$ for some Banach space $Z$ and its dual $Z^*$ (we use make use of a more general description of the left hand side of \eqref{E:Compatibility}) in preparation for a more abstract statement later).   For the sake of convenience, we would also indicate the sum $\sum_{j=1}^m (X_ju)(X_j v)$ by the notation $<Xu,Xv>$, as long as it is clear from the context.

In the case $p=2$, the operator $\mathcal L_p$ is known as the (real part of the) Kohn-Laplacian.  Our results here extend the ones established for $p=2$ in \cite{N},\cite{DGN}.  In the classical setting, the method of layer potentials has met success in tackling such problems.  However, even in the simplest prototype of the H\"ormander type vector field, that is, the Heisenberg group, one faces significant difficulties in inverting the corresponding operators on the boundary of the domain (for an appropriate class of functions) due to the presence of characteristic points, see \cite{J1}.  Besides, layer potentials will not help in dealing with the case $p\neq 2$.   In view of this  obstacle, we seek  a different method to establish the problem of existence of the non-linear Neumann problem in this broad context.

One approach to the problem \eqref{E:NP} is to broaden the class and the concept of solutions in such a way that existence can be established.  One would then establish that solutions are regular in some context.  It is our purpose to explore the first aspect in this paper and devote the second one to another occasion \cite{DGMN}.  We now introduce variational solutions.

\vspace{0.3cm}

\begin{dfn}\label{D:NP}
Fix $1 \leq p < \infty, 1 \leq q < \infty$, $f\in L^{q'}(\Om,dx)$, $\nu\in B^q_{1 - \frac{s}{q}}(\partial \Om,d\mu)^*$ where $1/q + 1/q' = 1$.  A function $u\in \mathcal L^{1,q}(\Om,dx)$ is called a variational solution of \eqref{E:NP} if for any $\phi\in\mathcal L^{1,q}(\Om,dx)$

\begin{equation}\label{E:vNP}
\int_{\Om} |Xu|^{p-2}<Xu,X\phi>\,dx\ = \ <\nu,Tr(\phi)>\ -\ \int_\Om f\,\phi\,dx
\end{equation}
In the above, $\Om \subset \Rn$ is a domain for which there is a continuous (trace) operator $Tr:\mathcal L^{1,q}(\Om,dx) \to B^q_{1-\frac{s}{q}}(\partial\Om,d\mu)$.
\end{dfn}
In the above, $\mathcal L^{1,q}(\Omega,dx)$ denotes the by now standard subelliptic Sobolev space associated to the system of vector fields $X$.

For the definition and some properties of the Besov spaces $B^q_{1 - \frac{s}{q}}(\partial \Om,d\mu)$ we refer the reader to Section \ref{S:preliminaries} below.  The measure $\mu$ is an $s$-Ahlfors measure (again see section \ref{S:preliminaries} for the definition) supported in $\partial\Om$ with $0< s < min(q,\frac{n+q}{2})$ 

\vspace{0.3cm}

\begin{rmrk}\label{R:mu}
In order for Definition \ref{D:NP} to make sense, it is required that the continous operator $Tr$ exists and this was established in \cite[Theorem 10.6]{DGN}
under the assumptions

\begin{itemize}
\item[1.] $\Om$ is an extension domain (i.e. $X-(\eps,\delta)$-domain) and
\item[2.] $\mu$ satisfies the estimate (see definition \ref{mu_upper})

\begin{equation}\label{E:Ahf}
 \mu(B(x,r))\ \leq \ M\frac{|B(x,r)|}{r^s}.
\end{equation}

In particular, 1. and 2. are fulfilled when $\Om = \{\rho < 1\}$ where $\rho$
is the homogenous gauge in a Carnot group of step two and when

\[
d\mu = |X\rho|\,d\sigma
\]
where $d\sigma$ is the surface measure on $\partial \Om$, and  $s = 1$ in \eqref{E:Ahf},
see e.g. \cite{CG}, \cite{CGN1} and \cite{CGN2}.  Therefore, in the sequel we will work under
such hypothesis.  Whenever an expression involves function values on the boundary, we mean the trace of such function.  When the context is clear will omit the $Tr$ notation.

\item[3.] By taking the test function $\phi(x)\equiv 1$, we arrive at the compatibility condition \eqref{E:Compatibility}, which is necessary for the existence of solutions to the above variational Neumann problem.

\end{itemize}
\end{rmrk}

\vspace{0.3cm}

The close subspace of $\mathcal L^{1,q}(\Om,dx)$ defined by

\[
\tilde{\mathcal L}^{1,q}(\Om,dx)
\ =\ \left\{f\in\mathcal L^{1,q }(\Om,dx)\,\,\Big |\,\, f_\Om =
\frac{1}{|\Om|}\int_\Om f(x)\,dx\ = 0\right\}
\]
is an appropriate space to treat the Neuamann problem.

\vspace{0.3cm}

\begin{rmrk}\label{R:important}
Due to Poincar\'e inequality (see Theorem \ref{T:Poincare-Sobolev} below), an equivalent norm on $\tilde{\mathcal L}^{1,q}(\Om)$ is given by

\[
\|f\|_{\tilde{\mathcal L}^{1,q}(\Om)}\ = \
\left(\int_\Om |Xf|^q\,dx\right)^\frac{1}{q}.
\]
\end{rmrk}

Our aim is to establish the existence and uniqueness of variational solution in such spaces.  Our main result is the following

\vspace{0.3cm}

\begin{thrm}\label{T:main}
Let $U$ be as in Theorem \ref{T:Sh_restriction_boundary}.
Fix $1<p <\infty$ and $1 < q \leq p$.   Let $f\in L^{q'}(\Om,dx)$, $\Om\subset U \subset \mathbb R^n$ be an $X-(\epsilon,\delta)$ domain with $diam(\Om) < R_o/2$, $|\Om| > 0$, $\mu$ an upper $s$-Ahlfors measure and $\nu\in B^q_{1-s/q}(\partial\Om,d\mu)^*$ satisfying the compatibility condition \eqref{E:Compatibility}.  There exists a unique $u\in\tilde{\mathcal L}^{1,p}(\Om,dx)$ that solves the Neumann problem for $\mathcal L_p$ in the sense of Definition \ref{D:NP}.  In the above, the parameter $R_o$ is from definition \ref{D:Ahlfors} below.  Furthermore, the solution $u$ satisfies the estimate

\begin{equation}\label{E:basic_estimate}
\|Xu\|_{L^p(\Om,dx)}\ \leq\ C\,(\|\nu\|_{B^q_{1-s/q}(\partial \Om,d\mu)^*}\ +\ \|f\|_{L^{q'}(\Om,dx)})^\frac{1}{p-1}\ ,
\end{equation}
where $C$ depends on $\Om$ and various parameters such as $R_o$, $p,q$ and so on but does not depend on $u,\nu,f$.
\end{thrm}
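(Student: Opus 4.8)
The plan is to realize the variational solution as the unique minimizer of a strictly convex, coercive functional on the Banach space $\tilde{\mathcal L}^{1,p}(\Om,dx)$, and then to identify the Euler--Lagrange equation of that minimizer with \eqref{E:vNP}. Concretely, I would define, for $u\in\tilde{\mathcal L}^{1,p}(\Om,dx)$,
\[
J(u)\ =\ \frac1p\int_\Om |Xu|^p\,dx\ -\ <\nu,Tr(u)>\ +\ \int_\Om f\,u\,dx .
\]
First I would check that $J$ is well defined and finite on $\tilde{\mathcal L}^{1,p}(\Om,dx)$: since $1<q\le p$ and $\Om$ is bounded with $|\Om|>0$, the inclusion $\mathcal L^{1,p}\hookrightarrow \mathcal L^{1,q}$ holds, so $Tr(u)\in B^q_{1-s/q}(\partial\Om,d\mu)$ is defined and the pairing with $\nu$ makes sense; the term $\int_\Om f u\,dx$ is controlled by Hölder and the subelliptic Poincaré--Sobolev embedding (Theorem \ref{T:Poincare-Sobolev}).

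Next I would prove coercivity. By Remark \ref{R:important}, $\|u\|_{\tilde{\mathcal L}^{1,p}(\Om)}=\|Xu\|_{L^p(\Om)}$ is an equivalent norm on $\tilde{\mathcal L}^{1,p}(\Om)$, so $\frac1p\int_\Om|Xu|^p\,dx = \frac1p\|u\|^p$. For the two linear terms, continuity of $Tr$ and the Poincaré inequality give
\[
|<\nu,Tr(u)>|\ \le\ \|\nu\|_{B^q_{1-s/q}(\partial\Om,d\mu)^*}\,\|Tr(u)\|_{B^q_{1-s/q}}\ \le\ C\,\|\nu\|_{*}\,\|Xu\|_{L^p(\Om)},
\]
and similarly $|\int_\Om f u\,dx|\le \|f\|_{L^{q'}}\|u\|_{L^q}\le C\|f\|_{L^{q'}}\|Xu\|_{L^p}$. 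Hence $J(u)\ge \frac1p\|Xu\|_{L^p}^p - C(\|\nu\|_*+\|f\|_{L^{q'}})\|Xu\|_{L^p}\to+\infty$ as $\|Xu\|_{L^p}\to\infty$, since $p>1$. Strict convexity of $J$ follows from strict convexity of $t\mapsto |t|^p$ on $\R^m$ for $p>1$ together with the linearity of the remaining terms; weak lower semicontinuity of $u\mapsto\int_\Om|Xu|^p\,dx$ (convexity plus the fact that $\tilde{\mathcal L}^{1,p}$ is a closed subspace of the reflexive space $\mathcal L^{1,p}$, using $1<p<\infty$) and weak continuity of the two linear functionals give that $J$ is sequentially weakly lower semicontinuous. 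The direct method then yields a unique minimizer $u\in\tilde{\mathcal L}^{1,p}(\Om,dx)$.

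Finally I would derive the Euler--Lagrange identity. For $\phi\in\tilde{\mathcal L}^{1,p}(\Om,dx)$, differentiating $t\mapsto J(u+t\phi)$ at $t=0$ (the Gâteaux derivative of the $p$-energy being the standard one, justified by dominated convergence) gives exactly
\[
\int_\Om |Xu|^{p-2}<Xu,X\phi>\,dx\ =\ <\nu,Tr(\phi)>\ -\ \int_\Om f\,\phi\,dx
\]
for all $\phi\in\tilde{\mathcal L}^{1,p}$. To upgrade this to all $\phi\in\mathcal L^{1,q}(\Om,dx)$ as required by Definition \ref{D:NP}, write an arbitrary $\phi$ as $\phi=(\phi-\phi_\Om)+\phi_\Om$; the first piece lies in $\tilde{\mathcal L}^{1,q}$ (and, since $q\le p$ and $\Om$ is bounded, the identity extends from $\tilde{\mathcal L}^{1,p}$ to $\tilde{\mathcal L}^{1,q}$ by density, using that $\mathcal L^{1,p}$ is dense in $\mathcal L^{1,q}$ on a bounded extension domain), and the constant piece $\phi_\Om$ contributes $0$ to the left side while $<\nu,\phi_\Om\cdot 1>-\int_\Om f\,\phi_\Om\,dx=\phi_\Om(<\nu,1>-\int_\Om f\,dx)=0$ by the compatibility condition \eqref{E:Compatibility}. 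This shows $u$ is a variational solution; uniqueness in $\tilde{\mathcal L}^{1,p}$ follows from uniqueness of the minimizer (equivalently, from strict monotonicity of the operator $v\mapsto -\sum_j X_j^*(|Xv|^{p-2}X_j v)$, testing the difference of two solutions against itself). The estimate \eqref{E:basic_estimate} is read off by taking $\phi=u$ in \eqref{E:vNP}: the left side equals $\|Xu\|_{L^p}^p$, and the right side is bounded by $C(\|\nu\|_*+\|f\|_{L^{q'}})\|Xu\|_{L^p}$ as above, whence $\|Xu\|_{L^p}^{p-1}\le C(\|\nu\|_*+\|f\|_{L^{q'}})$.

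The main obstacle I expect is the bookkeeping that makes everything fit together across the two exponents $p$ and $q$: one must be careful that $Tr$ is a priori only defined and continuous into $B^q_{1-s/q}$ using the $\mathcal L^{1,q}$ norm, so the coercivity and continuity estimates for the boundary term must route through the embedding $\mathcal L^{1,p}(\Om)\hookrightarrow\mathcal L^{1,q}(\Om)$ (valid because $\Om$ is bounded and $q\le p$) and the Poincaré inequality on $\tilde{\mathcal L}^{1,q}$, and the density argument extending \eqref{E:vNP} from $\tilde{\mathcal L}^{1,p}$-test functions to $\mathcal L^{1,q}$-test functions needs the same embedding together with density of smooth functions. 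The convex-analytic core — coercivity, strict convexity, weak lower semicontinuity, direct method — is otherwise standard once the function-space inclusions and the Poincaré inequality of Theorem \ref{T:Poincare-Sobolev} are in hand.
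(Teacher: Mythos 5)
Your overall strategy coincides with the paper's: both realize the variational solution as the minimizer of the functional $J_p$ of \eqref{E:FunctionalJ} via the direct method (coercivity, convexity, weak lower semicontinuity), identify minimizers with solutions of \eqref{E:vNP} through the Euler--Lagrange equation, dispose of the constant direction by the compatibility condition \eqref{E:Compatibility}, and read off \eqref{E:basic_estimate} by taking $\phi=u$ together with the trace theorem, the Poincar\'e inequality and H\"older. The execution differs in one structural respect: you minimize over the reflexive space $\tilde{\mathcal L}^{1,p}(\Om,dx)$ and get weak lower semicontinuity from convexity of the $p$-energy, whereas the paper minimizes over $\tilde{\mathcal L}^{1,q}(\Om,dx)$, using weak compactness for $q>1$, the compact embedding (Theorem \ref{T:cmpct-emb}, via Corollary \ref{C:strong-conv}) and Giusti's semicontinuity theorem (Theorem \ref{T:lsc}) in Corollary \ref{C:lsc}, and proves the equivalence minimizer $\Leftrightarrow$ variational solution (Lemma \ref{L:min=sol}) directly with test functions ranging over $\mathcal L^{1,q}(\Om,dx)$, so that no a posteriori enlargement of the test class is needed; membership $u\in\tilde{\mathcal L}^{1,p}$ is then recovered at the end from the estimate. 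Your route is cleaner on the existence side (it produces $u\in\tilde{\mathcal L}^{1,p}$ at once and avoids the compactness machinery), but it creates an extra step absent from the paper: upgrading \eqref{E:vNP} from test functions in $\mathcal L^{1,p}$ to all $\phi\in\mathcal L^{1,q}$, a strictly larger class when $q<p$.

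That upgrading step is where your argument has a genuine gap. You propose density: approximate $\phi\in\tilde{\mathcal L}^{1,q}$ by $\phi_j\in\mathcal L^{1,p}$ in the $\mathcal L^{1,q}$ norm and pass to the limit in \eqref{E:vNP}. The right-hand side passes to the limit (trace theorem with exponent $q$, and $f\in L^{q'}$), but the principal term does not: to conclude that $\int_\Om|Xu|^{p-2}<Xu,X\phi_j>\,dx$ converges to $\int_\Om|Xu|^{p-2}<Xu,X\phi>\,dx$ from $X\phi_j\to X\phi$ in $L^q$ you need $|Xu|^{p-1}\in L^{q'}(\Om,dx)$, i.e. $Xu\in L^{(p-1)q'}(\Om,dx)$, and $(p-1)q'=(p-1)q/(q-1)>p$ whenever $q<p$. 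This integrability is not furnished by $u\in\tilde{\mathcal L}^{1,p}$, and on a bounded domain $L^{(p-1)q'}$ is strictly smaller than $L^p$, so the limit passage fails as written (indeed, for $q<p$ it is not even clear that the left-hand side of \eqref{E:vNP} is finite for an arbitrary $\phi\in\mathcal L^{1,q}$). For the case $q=p$ the step is vacuous and your proof is complete and correct; for $q<p$ you would need either higher integrability of $Xu$, which is not available here, or, as the paper does, to set up the minimization and the Euler--Lagrange computation in $\mathcal L^{1,q}$ itself so that the identity is produced for the full test class at once (Lemma \ref{L:min=sol}) rather than obtained by approximation. (To be fair, the finiteness issue for $q<p$ is delicate in the paper's Lemma \ref{L:min=sol} as well, since it differentiates $J_p(u+\epsilon\phi)$ under the integral sign; but there the identity is derived directly, not through a limiting argument that manifestly requires the missing integrability.)
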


To establish this main result, we show that the functional $J_p$ defined in \eqref{E:FunctionalJ} has a unique minimum and it is the variational solution of \eqref{E:NP}.  Conversely,  variational solutions of \eqref{E:NP} are minimizers of $J_p$. The proof of this fact follows the classical approach of the direct method of Calculus of Variations.  First, we show that the functional $J_p$ is sequentially lower semicontinuous in the weak topology of $\mathcal L^{1,q}(\Om,dx)$.  This is valid for all $1 \leq q < \infty$.  We then establish the coercivity of $J_p$ in $\mathcal L^{1,q}(\Om,dx)$.  At the moment, this is limited to the range $1< q \leq p$.  A weakness as it may seems in comparison to its Euclidean counterpart.  However, in the Euclidean setting, for the case $p=2$ on Lipschitz domains, similar result only holds for $1 < q < 2+\epsilon$ where $\epsilon$ depends on the domain.  The class of domains (the $X-(\epsilon,\delta)$ domains) that we treat here have characteristic points on the boundary.  Such singular points act as corners (or perhaps as cusps) of the domain if seen from the Euclidean perspective.  It is well-known that in the Euclidean case the inclusion of the classes of domains

\begin{equation}\label{E:incl}
\text{Lipschitz} \subset \text{NTA}
\subset (\epsilon,\delta)
\end{equation}
hold.  The so call $(\epsilon,\delta)$ domains were introduce by \cite{Jo} and they are the largest class on which Sobolev functions can be extended.  
It is important to observe th at for a Carnot-Carath\'eodory space the last inclusion of \eqref{E:incl} continue to hold, see \cite{CT}.  A version of Jerison and Kenig's theory on the boundary behaviour of harmonic functions \cite{JK} has been extended to the case of Carnot groups of step two \cite{CG} on $X-NTA$ domains.  Contrary to the Euclidean case where Lipschitz domains are probably the largest class of domains where a rich theory of boundary value problems can be developed, the analogue notion of Lipschitz domain in the subelliptic setting even in the simplest case of the Heisenberg group is almost non-existence, see \cite{CG}.   Therefore part of our task is to identify a class of domains, as large as possible for which a theory of boundary value problems can be developed. The examples in \cite{CT} and \cite{CG} are an indication that the class of $X-(\epsilon,\delta)$ domains treated here is reasonably large.  Due to the inclusion \eqref{E:incl}, the domains considered in this paper, when confined to the Euclidean case,  is larger than the Lipschitz domains.  Hence, it is not supprising if a limitation on the parameter $q$ is occurs.   However, we have not been able to determine the sharp range at this moment. 

We start with some background materials in section \ref{S:preliminaries} and section \ref{S:main} is devoted in establishing our main result.  Finally, we like to point out that, to the best of our knowledge, the results here are new even in the classical setting, as far as the class of domains treated is concerned.

\vspace{0.5cm}

\section{\textbf{Preliminaries}}\label{S:preliminaries}
We collect previously established results needed for subsequent developments.  Some of these results are more general than what we require here.  We present the version that is already adapted to our setting and omit the details that are relevant in a more broader context.

 Let $d$ be the Carnot-Carath\'eodory distance associated with the system $X$.  It is by now well known that if the system $X$ satisfies H\"ormander's finite rank condition \cite{H}, then $d(x,y) < \infty$ for any $x,y\in \mathbb R^n$ (\cite{NSW}, \cite{Chw}, \cite{Rs}) and the metric balls $B(x,r)$ of $d$ satisfy a doubling condition \cite{NSW}.  Denote the Borel measures on the metric space $(\Rn,d)$ by $\mathcal B_d$.  For a set $E\subset \Rn$, $|E|$ denotes the Lebesgue measure of $E$.

\vspace{0.3cm}

\begin{dfn}[\cite{DGN}]\label{D:Ahlfors}
Given $s\geq 0$, a measure $\mu\in \mathcal B_d$ will be called an \emph{upper $s$-Ahlfors measure}, if there exist $M,  R_o > 0$, such that for $x\in \Rn,\ 0<r\leq R_o$, one has

\begin{equation*}\label{mu_upper}
\mu(B(x,r)) \leq\ M\ \frac{|B(x,r)|}{r^s} .
\end{equation*}
\noindent 
We will say that $\mu$ is a \emph{lower $s$-Ahlfors measure},
if for some $M, R_o > 0$ one has instead for $x$ and $r$ as above

\begin{equation*}
\mu(B(x,r))\ \geq\ M^{-1}\ \frac{|B(x,r)|}{r^s} .
\end{equation*}
\end{dfn}
\medskip

The (dual of the) following subelliptic Besov space will serve as the space for the Neumann datum on the boundary.

\vspace{0.3cm}

\begin{dfn}[\cite{DGN}]
Let $\mu\in \mathcal B_d$ having $supp\ \mu \subseteq F$, where $F$ is a closed subset of $\Rn$. For $1 \leq p < \infty$, $0<\beta<1$, we introduce the semi-norm

\[
\mathcal N^p_\beta(f,F, d\mu) \ = \
\left\{\int_F \int_F\ \left(\frac{|f(x)\ -\ f(y)|}{d(x,y)^\beta}\right)^p\ \frac{d(x,y)^s}{|B(x,d(x,y))|}
\,d\mu(y)\,d\mu(x)\right\}^\frac{1}{p} .
\]
The \emph{Besov space} on $F$, relative to the measure $\mu$, is defined as

\[
B^p_\beta(F,d\mu) \ =\ \{f\in L^p(F,d\mu)\,|\, \mathcal N^p_\beta (f,F,d\mu) < \infty\}\ .
\]
If $f\in B^p_\beta(F,d\mu)$, we define the Besov norm of $f$ as

\[
\|f\|_{B^p_\beta(F,d\mu)}\ =\ \|f\|_{L^p(F,d\mu)}\ +\ \mathcal N^p_\beta(f,F,d\mu)\ .
\]
\end{dfn}

We note in passing the following result \cite[Theorem 11.1]{DGN}, which motivates the use of Besov spaces (since their dual are larger) instead of  Lebesgue spaces on the boundary for the Neumann data.  However, our main result here does not make use of Theorem \ref{T:Besov_into_Lq} below.  Note also that since we only needed $\mu$ to be an upper $s$-Ahlfors measure, a comparison between Besov and Lebegues spaces as candidates for the boundary datum also require that we assume $\mu$ to be a lower $s$-Ahlfors measure as well.  In the subelliptic settings, ample supply of measures that satisfy both an upper and lower $s$-Ahlfors condition are found in \cite{DGN} and the references therein.

\vspace{0.3cm}

\begin{thrm}[\textbf{Embedding a Besov space into a Lebesgue space}]
\label{T:Besov_into_Lq}
Given a bounded set $U\subset \Rn$ having characteristic local parameters $C_1,R_o$, and local homogeneous dimension $Q$, let $\Om\subset \overline \Om \subset U$ be an open set with $diam\ \Om < R_o/2$. Let $p\geq 1$, $0 < \beta < 1$. Suppose $\mu$ is a lower $s$-Ahlfors measure with

\begin{equation*}
0< s\ \leq \ n + \beta p ,\quad\quad\quad  s\ <\ Q - \beta p\ ,
\end{equation*}
and such that $supp\ \mu = F \subset \Om$.
There exists a continuous embedding

\[
B^p_\beta(F,d\mu)\ \subset\ L^q(\Om,d\mu) ,\quad\quad\text{where} \quad q\ =\ p\ \frac{Q-s}{Q - s - \beta p} ,
\]
and, in fact, for $f\in B^p_\beta(F,d\mu)$ one has

\begin{equation*}
 \|f\|_{L^q(\Om,d\mu)}
 \ \leq\ C\ \left\{\left(1\ +\ \frac{diam(\Om)^\beta}{\mu(F)^{\beta/(Q-s)}}\right)
\mathcal N^p_\beta(f,F, d\mu)\ +\ \frac{1}{\mu(F)^{\beta/(Q-s)}}\ \|f\|_{L^p(\Om,d\mu)}\right\},
\notag
\end{equation*}
where $C = C(\Om,C_1,R_o,p,\beta,s,M) > 0$.
Furthermore,

\begin{align*}
& \left(\int_\Om |f(x) - f_{\Om,\mu}|^q\,d\mu(x)\right)^\frac{1}{q} \\
\notag
&\qquad\qquad\qquad \leq\ C\ \left(\int_F \int_F |f(x)\ -\ f(y)|^p \frac{d(x,y)^{s-\beta\; p}}{|B(x,d(x,y))|}
\,d\mu(y)\,d\mu(x)\right)^\frac{1}{p} ,
\end{align*}
where $f_{\Om,\mu}$ denotes the average $\frac{1}{\mu(\Om)}\int_{\Om} f\,d\mu$.
\end{thrm}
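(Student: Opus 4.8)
The plan is to read this as a fractional Sobolev--Poincar\'e embedding on the metric measure space $(F,d,\mu)$ and to exploit the single structural fact that a lower $s$-Ahlfors measure is, for the purposes of fractional integration, a measure of effective dimension $Q-s$. Concretely, the local doubling and reverse doubling of Lebesgue measure recalled above give $|B(x,r)|\asymp r^{Q}$ for $r\le R_o$, so the lower $s$-Ahlfors bound yields $\mu(B(x,r))\gtrsim r^{Q-s}$ and, most importantly, the pointwise comparison
\[
\frac{d(x,y)^s}{|B(x,d(x,y))|}\ \le\ \frac{M}{\mu\big(B(x,d(x,y))\big)} .
\]
Inserting this into the definition of $\mathcal N^p_\beta$ shows that the Besov seminorm dominates the Gagliardo energy $\int_F\int_F |f(x)-f(y)|^p\, d(x,y)^{-\beta p}\,\mu(B(x,d(x,y)))^{-1}\,d\mu(y)\,d\mu(x)$, which is exactly the quantity governed by a Riesz potential of order $\beta$ acting on a space of dimension $Q-s$. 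The Sobolev gain $1/q=1/p-\beta/(Q-s)$ then forces $q=p\,(Q-s)/(Q-s-\beta p)$, and the hypothesis $s<Q-\beta p$ is precisely what makes $q$ finite and positive (the companion bound $0<s\le n+\beta p$ entering through local integrability of the relevant kernel).

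To obtain the quantitative bound I would first prove a local Besov--Poincar\'e inequality: for a CC-ball $B=B(x_0,r)$ with $r\le\operatorname{diam}\Omega$ and $\mu(B)>0$,
\[
\frac{1}{\mu(B)}\int_B |f-f_{B,\mu}|\,d\mu\ \le\ C\,r^{\beta}\left(\frac{1}{\mu(B)}\int_B\int_B \frac{|f(z)-f(w)|^p}{d(z,w)^{\beta p}}\,\frac{d(z,w)^{s}}{|B(z,d(z,w))|}\,d\mu(w)\,d\mu(z)\right)^{1/p},
\]
proved by writing $f(\cdot)-f_{B,\mu}$ as a double average and applying H\"older together with $\mu(B(z,d(z,w)))\gtrsim d(z,w)^{Q-s}$. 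Next, at a $\mu$-Lebesgue point $x$ I would telescope along the dyadic balls $B_k=B(x,2^{-k}\operatorname{diam}\Omega)$, summing $|f_{B_{k+1},\mu}-f_{B_k,\mu}|$; because $\beta>0$ the geometric factors $2^{-k\beta}$ sum, and the local inequality converts the sum into a fractional maximal function of the Besov density, giving
\[
|f(x)-f_{B(x,\operatorname{diam}\Omega),\mu}|\ \le\ C\,(\operatorname{diam}\Omega)^{\beta}\,\big(\mathcal M_{\beta}f\big)(x),
\]
where $\mathcal M_\beta f(x)=\sup_{0<r\le\operatorname{diam}\Omega} r^{\beta}\big(\mu(B(x,r))^{-1}\!\int_{B(x,r)}\Phi^p\,d\mu\big)^{1/p}$ and $\Phi(x)^p=\int_F |f(x)-f(y)|^p\, d(x,y)^{-\beta p}\, d(x,y)^{s}|B(x,d(x,y))|^{-1}\,d\mu(y)$, so that $\|\Phi\|_{L^p(\mu)}=\mathcal N^p_\beta(f,F,d\mu)$. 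A single further gluing step from scale $\operatorname{diam}\Omega$ to the global average $f_{\Omega,\mu}=f_{F,\mu}$, realized as a Hedberg-type splitting at the radius $t_*\sim\mu(F)^{1/(Q-s)}$ at which the ball captures essentially all the mass, produces the lower-order contribution and makes the powers $\operatorname{diam}(\Omega)^\beta\mu(F)^{-\beta/(Q-s)}$ and $\mu(F)^{-\beta/(Q-s)}$ recorded in the statement explicit.

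Finally I would pass from the pointwise inequality to the $L^q(\mu)$ norm via the boundedness of the fractional maximal operator on the $(Q-s)$-regular space. Writing $\mathcal M_\beta f=(M_{\beta p}(\Phi^p))^{1/p}$ and using $q/p=(Q-s)/(Q-s-\beta p)>1$, the Hardy--Littlewood--Sobolev/Hedberg bound $M_{\beta p}:L^1(\mu)\to L^{q/p}(\mu)$ (with the weak endpoint upgraded to the strong inequality by the standard truncation argument, or, since $p>1$ is the case of interest, by the strong $L^p\to L^q$ mapping of the corresponding Riesz potential) gives $\|\mathcal M_\beta f\|_{L^q(\mu)}\le C\,\|\Phi\|_{L^p(\mu)}=C\,\mathcal N^p_\beta(f,F,d\mu)$. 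Combined with the pointwise estimate and the lower-order term this proves the displayed inequality for $\|f\|_{L^q(\Omega,d\mu)}$; the \emph{furthermore}, the homogeneous bound for $f-f_{\Omega,\mu}$, is the same argument retaining only the principal term and rewriting the density back through the lower Ahlfors bound so that the right-hand side appears with the weight $d(x,y)^{s-\beta p}/|B(x,d(x,y))|$. I expect the pointwise potential estimate to be the main obstacle: the local Besov--Poincar\'e inequality and the telescoping/Hedberg optimization must be carried out with $\mu$ controlled only \emph{from below}, so balls cannot be compared in $\mu$-measure two-sidedly and the weights must be routed through the Lebesgue measure sitting inside the kernel; keeping explicit track of the dependence on $\operatorname{diam}(\Omega)$ and $\mu(F)$, and closing the fractional integration at the sharp exponent $q$ (including the $L^1$ endpoint for $\Phi^p$), are where the hypotheses $s<Q-\beta p$ and $0<s\le n+\beta p$ are used decisively.
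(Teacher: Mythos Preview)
The paper does not prove this theorem: it is quoted verbatim as \cite[Theorem 11.1]{DGN} in the preliminaries section, with the explicit remark that ``our main result here does not make use of Theorem \ref{T:Besov_into_Lq}.'' So there is no in-paper argument to compare your proposal against; the statement is imported wholesale from the Danielli--Garofalo--Nhieu memoir.

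That said, your outline is broadly the right shape for how such embeddings are established in the source, and the identification of $\mu$ as effectively $(Q-s)$-dimensional via the lower Ahlfors bound is the correct heuristic. Two cautions if you intend to flesh this out. First, in the H\"ormander setting $|B(x,r)|\asymp r^{Q}$ is \emph{not} globally true with a uniform constant; the Nagel--Stein--Wainger estimates give $|B(x,r)|$ as a polynomial in $r$ with $x$-dependent coefficients, and $Q$ is the local homogeneous dimension on the fixed bounded set $U$. Your comparison $d(x,y)^{s}/|B(x,d(x,y))|\lesssim \mu(B(x,d(x,y)))^{-1}$ therefore needs the lower Ahlfors hypothesis directly, not a detour through $r^{Q-s}$. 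Second, the step where you invoke ``boundedness of the fractional maximal operator on the $(Q-s)$-regular space'' is the delicate one: $\mu$ is assumed lower Ahlfors only, hence need not be doubling, and the standard Hardy--Littlewood--Sobolev machinery is not available off the shelf. In \cite{DGN} this is handled by working with the specific kernel $d(x,y)^{s}/|B(x,d(x,y))|$ (which carries the doubling Lebesgue measure inside it) rather than with $\mu(B(x,d(x,y)))^{-1}$, and by a covering/Hedberg argument adapted to non-doubling $\mu$; your parenthetical about ``routing the weights through the Lebesgue measure sitting inside the kernel'' is exactly the point, but it is where the real work lies and should not be waved through.
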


\medskip

A wide class of domains to which our results hold is the following:

\vspace{0.3cm}

\begin{dfn}[\cite{N,DGN}]
An open set $\Omega \subset \Rn$ is called an $X-(\eps,\delta)$-domain if
there exist $0<\delta\leq \infty,\,0<\eps ~\leq ~1$, such that for any pair of
points $p,q \in \Omega $, if $d(p,q) \leq \delta$, then one can find a
continuous, rectifiable curve $\gamma :[0,T]\to\Omega$, for which
$\gamma(0)=p,\,\gamma(T)=q$, and

\begin{equation*}
l(\gamma )\ \le\ \frac{1}{\eps}\ d(p,q),\quad\quad d(z, \partial\Omega)\ \geq\ \eps\ \min\ \{d(p,z),d(z,q)\} \quad\text{for all } z\in \{\gamma\}.
\end{equation*}
\end{dfn}
\medskip

The following compact embedding theorem \cite[Theorem 1.28]{GN} plays an important role in our proof.

\vspace{0.3cm}

\begin{thrm}[\textbf{Compact embedding}]\label{T:Rellich}
Let $\Om\subset\Rn$ be an $X-(\epsilon,\delta)$
domain with $diam(\Om) <\frac{R_0}{2}$.  Then, one has the following:

\begin{itemize}
\item[(I)] The embedding $BV_X(\Om,dx)\hookrightarrow L^q(\Om,dx)$ is compact
for any $1\leq q < \frac{Q}{Q-1}$.
\smallskip
\item[(II)] For any $1\leq p < Q$ the
embedding $\mathcal L^{1,p}(\Om,dx)\hookrightarrow L^q(\Om,dx)$ is compact provided
that $1\leq q < \frac{Qp}{Q-p}$.
\smallskip
\item[(III)] For any $Q\leq p < \infty$
and any $1\leq q < \infty$, the embedding $\mathcal L^{1,p}(\Om,dx)\hookrightarrow
L^q(\Om,dx)$ is compact.
\end{itemize}
\end{thrm}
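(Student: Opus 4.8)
Since the three embeddings are cited from \cite[Theorem 1.28]{GN}, I only outline the strategy I would follow to prove them; it is a metric-measure-space version of the classical Rellich--Kondrachov argument. As known ingredients I would take the doubling property of the $d$-balls $B(x,r)$ (\cite{NSW}), the subelliptic Sobolev--Poincar\'e inequality on metric balls for the system $X$ in its $\mathcal L^{1,p}$ and $BV_X$ forms (\cite{GN}), and the fact (\cite{GN},\cite{CT}) that an $X-(\eps,\delta)$ domain $\Om$ with $diam(\Om)<R_0/2$ admits a bounded linear extension operator from $\mathcal L^{1,p}(\Om)$ into $\mathcal L^{1,p}(\Rn)$, resp.\ from $BV_X(\Om)$ into $BV_X(\Rn)$. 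Fix once and for all a bounded ball $B_0$ with $\overline\Om\subset B_0$ and let $Q$ be its local homogeneous dimension.

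First I would reduce to a fixed ball: given a bounded sequence $\{u_k\}$ in $\mathcal L^{1,p}(\Om)$ (resp.\ in $BV_X(\Om)$), extend each $u_k$ and multiply by a fixed cutoff $\chi\equiv1$ on $\overline\Om$ to produce $v_k$ supported in $B_0$ with $\sup_k\|v_k\|_{\mathcal L^{1,p}(B_0)}\le C\sup_k\|u_k\|_{\mathcal L^{1,p}(\Om)}=:CM$ (resp.\ with $\sup_k\|Xv_k\|(B_0)<\infty$); since restriction to $\Om$ is continuous, it suffices to extract a subsequence of $\{v_k\}$ converging in $L^q(B_0)$. Applying the subelliptic Sobolev inequality to the compactly supported $v_k$ then gives the a priori integrability: for $1\le p<Q$ the family $\{v_k\}$ is bounded in $L^{p^*}(B_0)$ with $p^*=\tfrac{Qp}{Q-p}$ (resp.\ bounded in $L^{Q/(Q-1)}(B_0)$ in the $BV_X$ case); for $p=Q$ it is bounded in $L^r(B_0)$ for every $r<\infty$; and for $p>Q$ it is bounded in the H\"older space $C^{0,1-Q/p}_d(\overline{B_0})$ by the subelliptic Morrey embedding.

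The heart of the matter is precompactness in $L^1$. For $r>0$ set $A_rv(x)=\frac{1}{|B(x,r)|}\int_{B(x,r)}v\,dy$. The $(1,1)$-Poincar\'e inequality for $X$ gives, on a slightly larger fixed ball $B_0'\supset B_0$, an estimate $\|v-A_rv\|_{L^1}\le C\,r\,\|Xv\|_{L^1(B_0')}\le C'\,r\,\|v\|_{\mathcal L^{1,p}(B_0')}$, so that $\sup_k\|v_k-A_rv_k\|_{L^1}\le C'rM\to0$ as $r\to0$ (and similarly with $\|Xv\|(B_0')$ in the $BV_X$ case). For each fixed $r$, the functions $A_rv_k$ are uniformly bounded and, using the doubling property, equicontinuous on $\overline{B_0'}$, so Arzel\`a--Ascoli produces a subsequence converging uniformly, hence in $L^1$. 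A standard total-boundedness argument combining these two facts shows that $\{v_k\}$ is precompact in $L^1(B_0)$; pass to a subsequence $v_{k_j}\to v$ in $L^1(B_0)$.

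It remains to conclude in each range. For $1\le p<Q$ and $1\le q<p^*$, interpolation between $L^1$ and $L^{p^*}$ gives $\|v_{k_i}-v_{k_j}\|_{L^q(B_0)}\le\|v_{k_i}-v_{k_j}\|_{L^1(B_0)}^{\theta}\,\|v_{k_i}-v_{k_j}\|_{L^{p^*}(B_0)}^{1-\theta}\to0$, using the bound of the previous paragraph, so $\{v_{k_j}\}$ converges in $L^q(B_0)$; restricting to $\Om$ yields (II). Running the same four steps with the $BV_X$-extension and the $BV_X$-Sobolev ($\|v\|_{L^{Q/(Q-1)}}\le C\|Xv\|(B_0')$) and $BV_X$-Poincar\'e inequalities yields (I) for $1\le q<\tfrac{Q}{Q-1}$. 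For $p\ge Q$ one gets (III): when $p>Q$, the H\"older bound plus Arzel\`a--Ascoli on the compact metric space $(\overline\Om,d)$ gives a uniformly convergent subsequence, hence convergence in $L^q(\Om)$ for every $q<\infty$; when $p=Q$, a bounded set in $\mathcal L^{1,Q}(\Om)$ is bounded in $\mathcal L^{1,p'}(\Om)$ for each $p'<Q$, and applying (II) with $p'<Q$ chosen so that $\tfrac{Qp'}{Q-p'}>q$ gives compactness into $L^q(\Om)$. The step I expect to be the main obstacle is the $L^1$-precompactness and, behind it, having at hand the $(1,1)$-Poincar\'e inequality for H\"ormander vector fields on metric balls and the Jones-type extension operator for $X-(\eps,\delta)$ domains --- the latter being precisely where the geometric hypothesis on $\Om$ is consumed; granting these and the subelliptic Sobolev/Morrey inequalities, the rest is routine.
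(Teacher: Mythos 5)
Your outline is sound, but note that the paper offers no proof of this theorem at all: it is quoted verbatim from \cite[Theorem 1.28]{GN}, so there is no in-paper argument to compare against. Your strategy (Jones-type extension for $X-(\eps,\delta)$ domains, subelliptic Sobolev--Poincar\'e and Morrey estimates, $L^1$-precompactness via ball averages and Arzel\`a--Ascoli, then interpolation and the reduction of the borderline case $p=Q$ to $p'<Q$) is essentially the route taken in the cited source, where the extension theorem and the $(1,1)$-Poincar\'e inequality you flag as the key ingredients are indeed established.
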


As an easy consequence of the above Theorem we have

\vspace{0.3cm}

\begin{thrm}\label{T:cmpct-emb}
With the same assumptions in Theorem \ref{T:Rellich}, for any $1 \leq p < \infty$ the embedding  $\mathcal L^{1,p}(\Om,dx)\hookrightarrow
L^p(\Om,dx)$ is compact.
\end{thrm}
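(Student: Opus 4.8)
The plan is to obtain the statement as a direct corollary of Theorem~\ref{T:Rellich} by splitting into two regimes according to whether $p$ lies below or above the local homogeneous dimension $Q$, and in each regime applying the corresponding part of that theorem with the target exponent chosen to be $q=p$. The continuity of the inclusion $\mathcal L^{1,p}(\Om,dx)\hookrightarrow L^p(\Om,dx)$ is immediate from the definition of the subelliptic Sobolev norm, which already controls the $L^p$ norm of the function itself, so the only thing requiring an argument is compactness.

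First I would treat the subcritical range $1\le p<Q$. Here part~(II) of Theorem~\ref{T:Rellich} gives compactness of $\mathcal L^{1,p}(\Om,dx)\hookrightarrow L^q(\Om,dx)$ for every $1\le q<\frac{Qp}{Q-p}$, so it suffices to verify that $q=p$ lies in this range. Dividing the inequality $p<\frac{Qp}{Q-p}$ by $p>0$ and multiplying through by the positive quantity $Q-p$ reduces it to $Q-p<Q$, i.e. to $p>0$, which holds; hence $\mathcal L^{1,p}(\Om,dx)\hookrightarrow L^p(\Om,dx)$ is compact whenever $p<Q$. Next, for the remaining range $Q\le p<\infty$, part~(III) of Theorem~\ref{T:Rellich} asserts compactness of $\mathcal L^{1,p}(\Om,dx)\hookrightarrow L^q(\Om,dx)$ for \emph{every} $1\le q<\infty$, and taking $q=p$ closes this case.

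Since the two ranges $1\le p<Q$ and $Q\le p<\infty$ exhaust all $p\in[1,\infty)$, combining the two cases yields compactness of $\mathcal L^{1,p}(\Om,dx)\hookrightarrow L^p(\Om,dx)$ for every $1\le p<\infty$, as claimed. I do not expect any real obstacle here: the argument is pure bookkeeping on the exponents, and the only two points to watch are that the endpoint $p=Q$ must be absorbed into part~(III) rather than part~(II), and that one needs the one-line strict inequality $p<\frac{Qp}{Q-p}$ in order to place $q=p$ strictly below the subelliptic Sobolev exponent in the subcritical regime.
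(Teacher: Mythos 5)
Your argument is correct and is exactly the intended deduction: the paper states Theorem \ref{T:cmpct-emb} without proof as an easy consequence of Theorem \ref{T:Rellich}, and your case split ($1\le p<Q$ via part (II) with $q=p<\frac{Qp}{Q-p}$, and $Q\le p<\infty$ via part (III)) is precisely that consequence.
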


We can deduce from Theorem \ref{T:cmpct-emb} easily the following

\vspace{0.3cm}

\begin{cor}\label{C:strong-conv}
For any $1 < p <\infty$, if $\{u_h\}_{h=1}^\infty$ is a sequence in $\mathcal L^{1,p}(\Om,dx)$ such that $u_h \rightharpoonup u$ for some $u\in \mathcal L^{1,p}(\Om,dx)$ then $u_h \to u$ in $L^p(\Om,dx)$ hence $u_h \to u$ in $L^1(\Om,dx)$ also.  
(We note explicitly the notation $\rightharpoonup$ means weak convergence whereas $\to$ means convergence in norm).
\end{cor}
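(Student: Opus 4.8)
The plan is to derive the statement from the compact embedding in Theorem~\ref{T:cmpct-emb} together with the classical subsequence principle, so that no new analytic input is needed. First, since $u_h \rightharpoonup u$ in $\mathcal{L}^{1,p}(\Om,dx)$, the sequence $\{u_h\}$ is norm-bounded in $\mathcal{L}^{1,p}(\Om,dx)$ (weakly convergent sequences in a normed space are bounded, by the uniform boundedness principle applied in the bidual). By Theorem~\ref{T:cmpct-emb} the inclusion $\mathcal{L}^{1,p}(\Om,dx)\hookrightarrow L^p(\Om,dx)$ is compact, hence from an arbitrary subsequence of $\{u_h\}$ one can extract a further subsequence $\{u_{h_k}\}$ converging strongly in $L^p(\Om,dx)$ to some $v\in L^p(\Om,dx)$.

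Next I would identify the limit. The inclusion $\mathcal{L}^{1,p}(\Om,dx)\hookrightarrow L^p(\Om,dx)$ is in particular continuous, hence it maps weakly convergent sequences to weakly convergent sequences; thus $u_h\rightharpoonup u$ also in $L^p(\Om,dx)$, and a fortiori $u_{h_k}\rightharpoonup u$ in $L^p(\Om,dx)$. Since $u_{h_k}\to v$ strongly in $L^p(\Om,dx)$ as well, uniqueness of weak limits forces $v=u$. We have therefore shown that every subsequence of $\{u_h\}$ has a further subsequence converging to $u$ in $L^p(\Om,dx)$; the standard subsequence argument then upgrades this to $u_h\to u$ in $L^p(\Om,dx)$ for the whole sequence.

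Finally, since $\Om$ is bounded, $diam(\Om)<\infty$ gives $|\Om|<\infty$, and Hölder's inequality yields $\|u_h-u\|_{L^1(\Om,dx)}\le |\Om|^{(p-1)/p}\,\|u_h-u\|_{L^p(\Om,dx)}\to 0$, which is the last assertion. I do not expect any genuine obstacle in this argument; the only point worth keeping in mind is that the whole-sequence conclusion must pass through the subsequence principle rather than a single extraction, since a priori compactness only produces a strongly convergent \emph{sub}sequence, and it is precisely the identification of every such limit with $u$ that allows one to conclude convergence of the entire sequence.
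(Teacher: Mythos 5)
Your argument is correct and is essentially the paper's own proof: boundedness of $\{u_h\}$ via the uniform boundedness principle, the compact embedding of Theorem \ref{T:cmpct-emb} to extract strongly convergent subsequences, identification of each subsequential limit with $u$ through weak convergence in $L^p(\Om,dx)$, and the subsequence principle to conclude for the whole sequence. Your added details (the embedding carrying weak convergence into $L^p$, and the H\"older step giving $L^1$ convergence on the bounded domain) only make explicit what the paper leaves implicit.
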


The proof of this fact is rather standard but we include it here for the sake of convenience of the reader.

\begin{proof}[\textbf{Proof}]
The weak convergence assumption together with the Banach-Steinhaus (uniform boundedness principle) theorem implies that $\{u_h\}_{h=1}^\infty$ is bounded in $\mathcal L^{1,p}(\Om,dx)$.  Now Theorem \ref{T:cmpct-emb} implies that if $\{u_{h_j}\}$ is any subsequence of $\{u_h\}$, $\{u_{h_j}\}$ has a subsequence we denote it by $\{v_{h_j}\}$ such that $v_{h_j} \to v \in L^p(\Om,dx)$ for some function $v$.  Since $u_h \rightharpoonup u$ in $L^p(\Om,dx)$, we must have $u = v$ and the original sequence $\{u_h\}$ converge in norm to $u$ in $L^p(\Om,dx)$.
\end{proof}

The traces of Sobolev functions on the boundary of a domain is a delicate matter.  It was the purpose of \cite{DGN} to develop such a theory in the setting of a Carnot-Carath\'eodory space.  It is also an indispensible tool in dealing with the Neumann problem, even the non-linear version.  We thus recall  \cite[Theorem 10.6]{DGN}:

\vspace{0.3cm}

\begin{thrm}[\textbf{Trace theorem on the boundary}]
\label{T:Sh_restriction_boundary}
Let $U\subset \Rn$ be a bounded set with characteristic local parameters $C_1, R_o$, and let $p>1$. There is $\sigma = \sigma(X,U)>0$ such that, if $\Om\subset U$ is a bounded
$X-(\epsilon,\delta)$-domain with $rad(\Om)>0, diam(\Om) < \frac{R_o}{2\sigma}$,
$dist(\Om,\partial U) >  R_o$, and $\mu$ is an upper $s$-Ahlfors measure for some $0<s<p$,
having $supp\ \mu\subseteq \partial\Om$, then for every $0<\beta \leq 1 - s/p$
there exist a linear operator

\[
\mathcal Tr\ :\ \mathcal L^{1,p}(\Om,dx)\ \to\  B^p_\beta(\partial\Om,d\mu)\ ,
\]
and
a constant $C=C(U,p,s, M, \beta,\epsilon,\delta,rad(\Om))>0$, such that

\begin{equation}\label{E:Besov_ineq_boundary}
\|Tr\,f\|_{B^p_\beta(\partial\Om,d\mu)}
\ \leq\ C\ \|f\|_{\mathcal L^{1,p}(\Om,dx)}\ .
\end{equation}
Furthermore, if $f\in C^\infty(\overline\Om)\cap\mathcal L^{1,p}(\Om,dx)$, then $\mathcal Tr\,f = f$ on $\partial\Om$.
\end{thrm}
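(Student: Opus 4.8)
The plan is to realize $\mathcal Tr$ as the Carnot--Carath\'eodory Lebesgue value of $f$ and to control it by the standard trio of tools available in this setting: the subelliptic Poincar\'e inequality (cf. Theorem~\ref{T:Poincare-Sobolev}), the doubling property of Lebesgue measure on $(\Rn,d)$ \cite{NSW}, and the upper Ahlfors bound \eqref{E:Ahf}, the last of which converts integrals against $\mu$ on $\partial\Om$ into interior integrals against $dx$. First I would reduce to a global statement: since an $X-(\eps,\delta)$-domain is an extension domain in the CC setting \cite{CT}, there is a bounded linear extension $E:\mathcal L^{1,p}(\Om,dx)\to\mathcal L^{1,p}(\Rn,dx)$, and after replacing $f$ by $Ef$ it suffices to bound everything by $\|f\|_{\mathcal L^{1,p}(\Rn,dx)}$. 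The geometric hypotheses $diam(\Om)<R_o/(2\sigma)$ and $dist(\Om,\partial U)>R_o$ guarantee that all balls $B(x,r)$ and their Poincar\'e dilates $B(x,\lambda r)$ with $x\in\overline\Om$ and $r\lesssim diam(\Om)$ remain inside $U$ within the good radius $R_o$, so that the doubling and Poincar\'e constants are uniform; the factor $\sigma=\sigma(X,U)$ is chosen to absorb the dilation constant $\lambda$.

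Next, for $x\in\partial\Om$ and $r_k=2^{-k}\rho$ with $\rho\simeq diam(\Om)$, write $f_{B(x,r)}=\frac{1}{|B(x,r)|}\int_{B(x,r)}f\,dy$. The subelliptic Poincar\'e inequality together with doubling yields the annular estimate
\[
|f_{B(x,r_{k+1})}-f_{B(x,r_k)}|\ \le\ C\,r_k\left(\frac{1}{|B(x,\lambda r_k)|}\int_{B(x,\lambda r_k)}|Xf|^p\,dy\right)^{\frac1p},
\]
whose right-hand side is summable in $k$; hence the telescoping series converges and I set $\mathcal Tr f(x):=\lim_{k\to\infty}f_{B(x,r_k)}$. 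A maximal-function argument shows the limit is independent of the chosen radii and exists for $\mu$-a.e. $x\in\partial\Om$. Linearity of $\mathcal Tr$ is immediate from linearity of averaging and of $E$.

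The quantitative bound \eqref{E:Besov_ineq_boundary} splits into an $L^p(\mu)$ part and a seminorm part. For the former I would dominate $|\mathcal Tr f(x)|\le|f_{B(x,\rho)}|+\sum_k|f_{B(x,r_{k+1})}-f_{B(x,r_k)}|$: the leading average is controlled by $\|f\|_{L^p(\Rn)}$ since $B(x,\rho)\supset\Om$, while the tail is handled by the annular estimate, Minkowski's inequality, and \eqref{E:Ahf}, the exponent count closing because $s<p$. The seminorm part is the crux. For $x,y\in\partial\Om$ set $t=d(x,y)$; both points lie in $B(x,2t)$, and I would write
\[
\mathcal Tr f(x)-\mathcal Tr f(y)=\big[\mathcal Tr f(x)-f_{B(x,2t)}\big]+\big[f_{B(x,2t)}-f_{B(y,4t)}\big]+\big[f_{B(y,4t)}-\mathcal Tr f(y)\big],
\]
bounding the middle term by a single Poincar\'e application on $B(y,4t)$ and the two outer terms by telescoping tails from scale $t$ downward. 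This produces $|\mathcal Tr f(x)-\mathcal Tr f(y)|^p\lesssim t^{p}\,A(x,y)$, where $A(x,y)$ is built from averages of $|Xf|^p$ over balls of radius $\lesssim t$ about $x$ and $y$. Inserting this into the double integral defining $\mathcal N^p_\beta(\mathcal Tr f,\partial\Om,d\mu)^p$, the Besov weight supplies $d(x,y)^{s-\beta p}/|B(x,d(x,y))|$, the Poincar\'e gain supplies $t^{p}$, and integrating the inner variable against $\mu$ over the annulus $d(x,y)\simeq t$ and invoking \eqref{E:Ahf} replaces $d\mu$ by a factor $|B(x,t)|/t^{s}$; the three powers of $t$ combine to $t^{\,p-\beta p-s}$ per dyadic scale. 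A Fubini/covering step then integrates each $|Xf|^p$ over $\Om$ a bounded number of times, and summing the scales $t\simeq2^{-j}\rho$ gives the geometric series $\sum_j(2^{-j}\rho)^{\,p-\beta p-s}$, controlled precisely by the hypothesis $\beta\le1-s/p$, i.e. $p-\beta p-s\ge0$ (with $s<p$ being exactly what makes the admissible range of $\beta$ nonempty). The total is then $\le C\|Xf\|_{L^p(\Om,dx)}^p\le C\|f\|_{\mathcal L^{1,p}(\Om,dx)}^p$.

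Finally, for consistency the construction may equally be carried out with the one-sided averages $f_{\Om\cap B(x,r)}=\frac{1}{|\Om\cap B(x,r)|}\int_{\Om\cap B(x,r)}f\,dy$, which are legitimate because an $X-(\eps,\delta)$-domain has the measure-density property $|\Om\cap B(x,r)|\gtrsim|B(x,r)|$ and hence supports the relative Poincar\'e inequality; the two definitions agree $\mu$-a.e. If $f\in C^\infty(\overline\Om)\cap\mathcal L^{1,p}(\Om,dx)$ then $f$ is uniformly continuous on $\overline\Om$, so $f_{\Om\cap B(x,r)}\to f(x)$ as $r\to0$ and therefore $\mathcal Tr f=f$ on $\partial\Om$. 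I expect the main obstacle to be the seminorm step, and within it the endpoint case $\beta=1-s/p$, where the single-scale count above is only borderline summable: closing this endpoint requires exploiting the extra decay furnished by the full telescoping sum rather than its leading term, and carrying out the Fubini/covering estimate uniformly down to the characteristic points of $\partial\Om$, where the CC-geometry of the balls $B(x,r)$ degenerates from the Euclidean viewpoint.
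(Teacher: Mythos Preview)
The paper does not prove this theorem: it is stated in the Preliminaries as a result quoted verbatim from \cite[Theorem~10.6]{DGN} (see the sentence ``We thus recall \cite[Theorem 10.6]{DGN}:'' immediately preceding the statement), and no argument is supplied. So there is no in-paper proof to compare your proposal against.

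That said, your outline is the standard route and is essentially the strategy carried out in \cite{DGN}: extend $f$ from the $X$-$(\eps,\delta)$-domain to the ambient space, define the trace as the Carnot--Carath\'eodory Lebesgue value via telescoping ball averages, and control the Besov seminorm by Poincar\'e plus doubling plus the upper Ahlfors bound on $\mu$. Two small corrections: the extension theorem for $X$-$(\eps,\delta)$-domains is \cite{N,GN}, not \cite{CT} (the latter concerns the inclusion $X$-NTA $\subset$ $X$-$(\eps,\delta)$); and the measure-density property $|\Om\cap B(x,r)|\gtrsim|B(x,r)|$ is not automatic for general $(\eps,\delta)$-domains, so the one-sided-average variant you mention at the end needs the additional hypothesis $rad(\Om)>0$ that appears in the statement. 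Your identification of the endpoint $\beta=1-s/p$ as the delicate case is accurate; in \cite{DGN} this is handled by keeping the full telescoping tail rather than a single scale, exactly as you suggest.
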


Our final pillar is the following result \cite[Corollary 1.6]{GN} specialized to our setting:

\vspace{0.3cm}

\begin{thrm}\label{T:Poincare-Sobolev}
Let $\Om$ be an $X-(\epsilon, \delta)$ domain, $1\leq p < Q$.  There exists some constant $C=C(\epsilon,\delta,p,n)>0$ such that
 for any $1\leq k \leq \frac{Q}{Q-p}$ and any $u\in\mathcal L^{1,p}(\Om)$

\begin{equation*}
\left(\frac{1}{|\Om|}\int_\Om |u-u_\Om|^{kp}\,dx\right)^\frac{1}{kp}
\leq C \ diam(\Om)\left(\frac{1}{|\Om|}\int_\Om |Xu|^p\,dx\right)^\frac{1}{p}
\end{equation*}
\end{thrm}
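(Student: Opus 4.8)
The plan is to deduce the stated inequality from a pointwise \emph{representation formula} for $u-u_\Om$ on the domain, followed by a fractional-integration (Hardy--Littlewood--Sobolev) estimate in the underlying space of homogeneous type $(\Rn,d,dx)$. Write $Q$ for the local homogeneous dimension attached to $U$. The doubling property gives the local comparison $|B(x,r)|\approx r^{Q}$ for $x\in U$, $0<r\le R_o$; it is the lower bound $|B(x,r)|\gtrsim r^{Q}$ that both forces the restriction $p<Q$ and produces the Sobolev exponent $p^{*}:=\frac{Qp}{Q-p}=\frac{Q}{Q-p}\,p$, i.e.\ the exponent $kp$ at the largest admissible $k=\frac{Q}{Q-p}$.

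The first and central step is to prove, for $u\in\mathcal L^{1,p}(\Om)$ (smooth after a density reduction) and a.e.\ $x\in\Om$,
\begin{equation}\label{E:repr}
|u(x)-u_\Om|\ \le\ C(\eps,\delta)\int_\Om\frac{d(x,y)}{|B(x,d(x,y))|}\,|Xu(y)|\,dy .
\end{equation}
On a single metric ball this is the classical subunit representation of Nagel--Stein--Wainger, obtained by integrating $Xu$ along subunit curves and invoking doubling. To promote it to the whole $X$-$(\eps,\delta)$ domain I would join an arbitrary $x\in\Om$ to a fixed central ball by the admissible curve $\gamma$ supplied by the definition of an $X$-$(\eps,\delta)$ domain, and then cover $\gamma$ by a Boman chain of balls that stay inside $\Om$: the length bound $l(\gamma)\le\eps^{-1}d(p,q)$ controls the number and radii of the chain balls, while the non-tangential bound $d(z,\partial\Om)\ge\eps\min\{d(p,z),d(z,q)\}$ guarantees the chain balls do not reach $\partial\Om$. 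Summing the one-ball estimate along the chain, and finally comparing the central-ball average with $u_\Om$, yields \eqref{E:repr}. This patching is the main obstacle: it is exactly here that the geometry of the domain — in particular its characteristic boundary points, which from the Euclidean viewpoint behave like corners or cusps — must be absorbed, and the $(\eps,\delta)$ condition is precisely the structural hypothesis that makes the chain construction go through.

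Granting \eqref{E:repr}, the right-hand side is the potential $Tg(x)=\int_\Om\frac{d(x,y)}{|B(x,d(x,y))|}\,g(y)\,dy$ evaluated at $g=|Xu|$. Since the kernel is locally comparable to $d(x,y)^{1-Q}$, the operator $T$ behaves like a Riesz potential of order one, and the Hardy--Littlewood--Sobolev theorem for spaces of homogeneous type gives, for $1\le p<Q$,
\begin{equation}\label{E:HLS}
\left(\frac{1}{|\Om|}\int_\Om |Tg|^{p^{*}}\,dx\right)^{\frac{1}{p^{*}}}\ \le\ C\,diam(\Om)\left(\frac{1}{|\Om|}\int_\Om |g|^{p}\,dx\right)^{\frac{1}{p}} .
\end{equation}
The factor $diam(\Om)$ is exactly the order-one scaling of the kernel: testing $T$ on $g\equiv 1$ gives $Tg\approx\int_0^{diam(\Om)}dr=diam(\Om)$, which fixes the power of the diameter. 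Inserting $g=|Xu|$ and combining \eqref{E:repr} with \eqref{E:HLS} proves the endpoint case $k=\frac{Q}{Q-p}$.

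The remaining values $1\le k<\frac{Q}{Q-p}$ then come for free: on the probability space $(\Om,\frac{dx}{|\Om|})$, H\"older's inequality gives the monotonicity
\[
\left(\frac{1}{|\Om|}\int_\Om|u-u_\Om|^{kp}\,dx\right)^{\frac{1}{kp}}\ \le\ \left(\frac{1}{|\Om|}\int_\Om|u-u_\Om|^{p^{*}}\,dx\right)^{\frac{1}{p^{*}}}
\]
for every $k\le\frac{Q}{Q-p}$, so the endpoint estimate dominates all lower exponents with the same right-hand side. Keeping track of the constants, $C$ depends only on $\eps,\delta,p$ and $n$, through the homogeneous dimension $Q$ and the doubling and fractional-integration constants of the ambient space, as asserted. (An alternative to \eqref{E:repr} would be to extend $u$ to a ball $B\supseteq\Om$ with $r(B)\approx diam(\Om)$ using that $X$-$(\eps,\delta)$ domains are extension domains, apply the ball Sobolev--Poincar\'e inequality to the extension, and note that the trivial bound $|\Om|\le|B|$ already makes the normalization work in our favour; the delicate point in that route is to obtain a purely homogeneous control $\|X(Eu)\|_{L^{p}(B)}\le C\|Xu\|_{L^{p}(\Om)}$ of the extension, which is why I favour the representation-formula argument.)
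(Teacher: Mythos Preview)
The paper does not prove this theorem: it is stated in Section~\ref{S:preliminaries} as a preliminary result, quoted verbatim (``specialized to our setting'') from \cite[Corollary~1.6]{GN}, with no argument supplied. So there is no ``paper's own proof'' to compare against; your proposal is an independent sketch of a proof of a result the present paper simply imports.

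As a sketch, your outline is the standard one and is essentially what is carried out in \cite{GN}: a Boman--Jerison chain built from the $X$-$(\eps,\delta)$ geometry yields the representation inequality, and then a fractional-integration bound for the order-one potential upgrades Poincar\'e to Sobolev--Poincar\'e. One genuine gap you should be aware of: at $p=1$ the Hardy--Littlewood--Sobolev step only gives a weak-type $(1,Q/(Q-1))$ bound for the potential $T$, not the strong-type estimate you invoke in \eqref{E:HLS}. The endpoint $k=\frac{Q}{Q-1}$ when $p=1$ therefore does not follow from your argument as written; one needs an additional device (a truncation/level-set argument, or the weak-type estimate combined with a maximal-function bound as in \cite{GN}) to recover the strong inequality. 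For $1<p<Q$ your route is fine, and the reduction to the endpoint via H\"older on the probability space $(\Om,|\Om|^{-1}dx)$ is correct.
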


\vspace{0.5cm}

\section{\textbf{Existence and Uniqueness of solutions to the Neumann Problem}}\label{S:main}

Let $\Om$ be an $X-(\epsilon,\delta)$ domain.  Consider the functional $J_p:\mathcal L^{1,q}(\Om,dx) \to \overline R$ given by

\begin{equation}\label{E:FunctionalJ}
J_p(u)\ =\ \int_{\Om} \frac{1}{p} |Xu(x)|^p  +  f(x)\,u(x)\,dx\ - \ <\nu,Tr(u)>\ .
\end{equation}

Note that in appriori, unless $q = p$ or $u\in C^\infty(\overline{\Omega})\cap \mathcal L^{1.q}(\Omega,dx)$, the functional $J_p$ can take on the value of $\pm\infty$ and the assumption that $\Om$ is an $X-(\epsilon,\delta)$ domain is necessary for the trace operator $tr$ (as in Definition \ref{D:NP}) to be defined.  Hence, this assumption is needed in the above definition and throughout the paper.

\vspace{0.3cm}

\begin{lemma}\label{L:convex}
Let $\Om$ be an $X-(\epsilon,\delta)$ domain.  For any $q \geq 1$ the functional $J_p$ given by \eqref{E:FunctionalJ} is convex on $\mathcal L^{1,q}(\Om,dx) $.  If in addition we assume that $|\Om| > 0$ and $p > 1$ then the functional $J_p$ is strictly convex in the subspace $\tilde{\mathcal L}^{1,q}(\Om,dx)$.
\end{lemma}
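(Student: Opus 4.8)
The plan is to exploit the fact that $J_p$ is a sum of three terms and analyze the convexity of each: the term $u \mapsto \int_\Om f\,u\,dx$ is linear in $u$, the term $u\mapsto -\langle\nu,Tr(u)\rangle$ is also linear (since $Tr$ is a linear operator by Theorem \ref{T:Sh_restriction_boundary} and $\nu$ is a continuous linear functional), hence both are convex (indeed affine). Therefore the convexity of $J_p$ reduces entirely to the convexity of the functional $u \mapsto \frac{1}{p}\int_\Om |Xu|^p\,dx$. For this I would argue pointwise: for fixed $x$, the map $\xi \mapsto |\xi|^p$ is convex on $\mathbb R^m$ for $p\geq 1$ (it is the composition of the convex nondecreasing function $t\mapsto t^p$ on $[0,\infty)$ with the norm $\xi\mapsto|\xi|$, which is convex), and $u\mapsto Xu(x) = (X_1u(x),\dots,X_mu(x))$ is linear. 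Composing a convex function with a linear map gives a convex function, and integrating over $\Om$ preserves convexity. This establishes that $J_p$ is convex on all of $\mathcal L^{1,q}(\Om,dx)$ for every $q\geq 1$.

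For the strict convexity statement on $\tilde{\mathcal L}^{1,q}(\Om,dx)$ when $p>1$ and $|\Om|>0$, I would proceed as follows. Since the linear parts contribute nothing to strictness, it suffices to show that $u\mapsto\frac1p\int_\Om|Xu|^p\,dx$ is strictly convex on $\tilde{\mathcal L}^{1,q}(\Om,dx)$. Take $u\neq v$ in $\tilde{\mathcal L}^{1,q}(\Om,dx)$ and $t\in(0,1)$. Suppose, for contradiction, that equality holds in the convexity inequality, i.e.
\[
\int_\Om |X(tu+(1-t)v)|^p\,dx \ =\ t\int_\Om |Xu|^p\,dx + (1-t)\int_\Om |Xv|^p\,dx.
\]
Because the integrand inequality $|t\,Xu + (1-t)\,Xv|^p \leq t|Xu|^p + (1-t)|Xv|^p$ holds pointwise, equality of the integrals forces pointwise equality a.e. in $\Om$. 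Now I would use the strict convexity of $\xi\mapsto|\xi|^p$ on $\mathbb R^m$ for $p>1$: pointwise equality forces $Xu(x) = Xv(x)$ for a.e. $x\in\Om$ (one has to be slightly careful, since $|\cdot|^p$ for $p>1$ is strictly convex as a function on $\mathbb R^m$, so equality in Jensen's two-point inequality forces the two vectors to coincide). Hence $X(u-v) = 0$ a.e. in $\Om$, i.e. $u-v$ has vanishing horizontal gradient.

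At this point I would invoke the Poincar\'e inequality, Theorem \ref{T:Poincare-Sobolev} (or equivalently Remark \ref{R:important}), applied to the function $w = u-v \in \tilde{\mathcal L}^{1,q}(\Om,dx)$: since $w_\Om = u_\Om - v_\Om = 0$, the inequality gives
\[
\left(\frac{1}{|\Om|}\int_\Om |w|^{q}\,dx\right)^{1/q} \leq C\,\mathrm{diam}(\Om)\left(\frac{1}{|\Om|}\int_\Om |Xw|^q\,dx\right)^{1/q} = 0,
\]
so $w = 0$ a.e. in $\Om$ (here $|\Om|>0$ is what makes this conclusion meaningful), i.e. $u=v$, contradicting $u\neq v$. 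Therefore the convexity inequality is strict. I expect the main (though still minor) obstacle to be the careful justification that pointwise equality in the two-point Jensen inequality for $|\cdot|^p$, $p>1$, actually forces equality of the vectors $Xu(x)$ and $Xv(x)$ rather than merely equality of their norms — this uses genuine strict convexity of $|\cdot|^p$ on $\mathbb{R}^m$ (equivalently strict convexity of the Euclidean ball together with strict convexity of $t\mapsto t^p$), and one should phrase it cleanly, e.g. by noting that if $a,b\in\mathbb R^m$ with $|ta+(1-t)b|^p = t|a|^p+(1-t)|b|^p$ and $t\in(0,1)$ then necessarily $a=b$. The rest is bookkeeping on which terms are affine and an application of results already in the excerpt.
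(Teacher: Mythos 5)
Your proposal is correct, and for the convexity part it coincides with the paper's argument (pointwise convexity of $z\mapsto|z|^p$ composed with the linear map $u\mapsto Xu$, integration, and the observation that the $f$- and $\nu$-terms are affine). The difference lies in the last step of the strict convexity argument. Both you and the paper reduce to: equality in the convexity inequality forces $Xu=Xv$ a.e.\ in $\Om$, by strict convexity of $|\cdot|^p$ for $p>1$. From $X(u-v)=0$ a.e., the paper then invokes the qualitative connectivity property of H\"ormander systems --- $X_i g=0$ a.e.\ for all $i$ implies $g$ is constant a.e. --- to get $u=v+c$, and kills the constant using $|\Om|>0$ and the zero-average normalization of $\tilde{\mathcal L}^{1,q}(\Om,dx)$. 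You instead apply the subelliptic Poincar\'e inequality (Theorem \ref{T:Poincare-Sobolev}, equivalently Remark \ref{R:important}) to $w=u-v$, which has $w_\Om=0$, to conclude $w=0$ directly. This is a legitimate and slightly more quantitative route; its only costs are that it imports the hypotheses under which the Poincar\'e inequality is stated, and that Theorem \ref{T:Poincare-Sobolev} is formulated for exponents strictly below $Q$, so for large $q$ you should either apply it with exponent $1$ (admissible, since $\Om$ is bounded and $Xw=0$ a.e.\ gives $\int_\Om|w-w_\Om|\,dx=0$) or simply appeal to the equivalent-norm statement of Remark \ref{R:important}. The paper's constancy argument is softer in that it needs nothing beyond the H\"ormander condition, which is why the lemma can be stated with minimal assumptions on $\Om$; your version buys a one-line conclusion at the price of these (mild, and in this paper always satisfied) extra hypotheses. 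Your handling of the equality case in the two-point inequality for $|\cdot|^p$ is the right point to be careful about and is correctly sketched.
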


\begin{proof}[\textbf{Proof}]
It suffices to establish the proposition for the non-linear part of $J_p$, namely

\begin{equation}\label{E:non-linear}
I(u)\ =\  \frac{1}{p}\, \int_{\Om}  |Xu(x)|^p\,dx\ .
\end{equation}

It is an elementary fact that the function $g:\mathbb R^m \to \mathbb R$ given by $g(z) = |z|^p$ is convex for $1\leq p < \infty$ (see e.g., \cite[Theorem 5.1]{Roc} and the remark there).  This implies that any functional of the form
$\mathcal F(u) = \int_\Om g(Xu)(x)\,dx$ is convex.  Hence, $I$ is convex.  We turn to the second part of the proposition.  For $p > 1$ the function $g$ is strictly convex.  Hence for any $0 < t < 1$ and any $u,v\in \tilde{\mathcal L}^{1,q}(\Om,dx)$ we have $g(t(Xu) + (1-t)(Xv)) < tg(Xu)|(1-t)g(Xv)$ unless $Xu = Xv$, that is, for all $i=1,..,m,\,X_i(u-v)=0$.   To continue, observe that the system of H\"ormander vector field $X = (X_1,...,X_m)$ satisfy the property that

\begin{equation*}
\forall i = 1,..m, \quad\text{for} \quad a.e.\ x\in\Om:\, X_i g(x) = 0 \quad\text{implies} \quad g(x) = constant \quad\text{for} \quad a.e.\ x \in \Om \ .
\end{equation*}
This imply $u = v + c$ for some constant $c$.  Since $u,v\in \tilde{\mathcal L}^{1,p}(\Om,dx)$, taking into account that $|\Om| > 0$, we see that $c$ must be zero.
\end{proof}

\vspace{0.3cm}

\begin{rmrk}\label{R:Compatibility}
Two remarks are in order.
\newline
(i)\, The uniqueness of minimizer would follow from the strict convexity of $J_p$, that is, minimizer would be unique in the spaces $\tilde{\mathcal L}^{1,q}(\Om,dx)$.
\newline
(ii)\, Due to the compatibility condition \eqref{E:Compatibility}, we have for any constant $c\in\R$ and any $u\in \mathcal L^{1,q}(\Om,dx)$, $J_p(u + c) = J_p(u)$.  Hence, if a (unique) minimizer  $u_o\in\tilde{\mathcal L}^{1,q}(\Om,dx)$ exists for $J_p$, then $u_o$ remains to be a minimizer for $J_p$ in $\mathcal L^{1,q}(\Om,dx)$ since for any $u\in \mathcal L^{1,q}(\Om,dx)$, $J_p(u) = J_p(u - u_\Om) \geq J_p(u_o)$ and that for any constant $c\in\R$, $u_o + c$ is also a minimizer for $J_p$ in $\mathcal L^{1,q}(\Om,dx)$.
\end{rmrk}

Next, we turn to the (sequential) lower semicontinuity of the functional $J_p$.   The following result (in many equivalent form) from Calculus of Variations is valuable to us and is available  from many sources.  For the sake of our purpose, we apply the following theorem from \cite[Theorem 4.5]{Gi}.

\vspace{0.3cm}

\begin{thrm}\label{T:lsc}
Let $\Om$ be an open set in $\R^n$, $M$ a closed set in $\R^N$, and let $F(x,u,z)$ be a function defined in $\Om\times M \times \R^l$ such that

\begin{enumerate}
\item[(i)] $F$ is a Caratheodory function, that is measurable in $x$ for every $(u,z)\in M\times\R^l$ and continuous in $(u,z)$ for almost every $x\in\Om$
\item[(ii)] $F(x,u,z)$ is convex in $z$ for almost every $x\in\Om$ and for every $u\in M$.
\item[(iii)] $F \geq 0$.
\end{enumerate}
Let $u_h,u\in L^1(\Om,M,dx),z_h, z \in L^1(\Om,\R^l)$ and assume that $u_h \to u$ and $z_h \rightharpoonup z$ in $L^1_{loc}(\Om,dx)$.  Then,

\[
\int_\Om F(x,u,z)\,dx \ \leq \liminf_{h\to \infty} \,\int_\Om F(x,u_h,z_h)\,dx
\]
\end{thrm}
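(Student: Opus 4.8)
The plan is to follow the classical De Giorgi--Ioffe proof of weak lower semicontinuity, packaging the convexity in $z$ through an affine representation of $F$. First I would make two harmless reductions: passing to a subsequence I may assume that $\liminf_{h\to\infty}\IO F(x,u_h,z_h)\,dx$ is actually a limit $L$, and if $L=+\infty$ there is nothing to prove, so assume $L<\infty$; passing to a further subsequence I may also assume $u_h\to u$ pointwise a.e.\ in $\Om$. I also record that on every compact $K\subset\Om$ the sequence $\{z_h\}$ is bounded in $L^1(K)$ (weakly convergent sequences are bounded) and, being weakly convergent there, equi-integrable on $K$ by the Dunford--Pettis theorem.

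The heart of the matter is the representation
\[
F(x,u,z)\ =\ \sup_{k\in\mathbb N}\bigl(a_k(x,u)+b_k(x,u)\cdot z\bigr),
\]
valid for a.e.\ $x$ and all $(u,z)\in M\times\R^l$, with each $a_k\colon\Om\times M\to\R$ and $b_k\colon\Om\times M\to\R^l$ again a Carath\'eodory function. To obtain it I would fix a countable dense set $\{q_j\}\subset\R^l$; since $z\mapsto F(x,u,z)$ is finite and convex, hence continuous, it equals the supremum of its affine supports at the points $q_j$, and one selects a subgradient $b_j(x,u)\in\partial_z F(x,u,q_j)$ measurably in $x$ and with the right continuity in $u$ by a Castaing-type measurable-selection argument (the graph of the subdifferential multifunction being jointly measurable), then sets $a_j(x,u)=F(x,u,q_j)-b_j(x,u)\cdot q_j$. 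The hypothesis $F\ge 0$ enters both here, to keep the supremum finite, and below, to discard pieces of $\Om$ without destroying a lower bound.

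Next I would pass to the limit on finite truncations of the supremum. Fixing $N$ and $\eps>0$, I partition $\Om$ (up to a null set) into Borel sets $A_1,\dots,A_N$ on which, respectively, one of the first $N$ affine pieces realizes $F(x,u(x),z(x))$ to within $\eps$. Invoking Egorov's theorem on $u_h\to u$ and Lusin's (or Scorza--Dragoni's) theorem on the $a_j,b_j$, I obtain, inside a prescribed compact subset and with $|\Om\setminus E|$ arbitrarily small, a set $E$ on which $u_h\to u$ uniformly and on which $a_j(\cdot,u)$, $b_j(\cdot,u)$ and $\sup_{|v|\le R}|b_j(\cdot,v)|$ are bounded, with $R$ large enough to contain $u(E)$ and all but finitely many $u_h(E)$. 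Since $F\ge0$,
\[
\IO F(x,u_h,z_h)\,dx\ \ge\ \sum_{j=1}^{N}\int_{A_j\cap E}\bigl(a_j(x,u_h)+b_j(x,u_h)\cdot z_h\bigr)\,dx .
\]
On $E$ one has $a_j(x,u_h)\to a_j(x,u)$ and $b_j(x,u_h)\to b_j(x,u)$ boundedly a.e., so splitting $b_j(\cdot,u_h)\cdot z_h=(b_j(\cdot,u_h)-b_j(\cdot,u))\cdot z_h+b_j(\cdot,u)\cdot(z_h-z)+b_j(\cdot,u)\cdot z$ and using equi-integrability of $\{z_h\}$ on $E$ (Vitali) for the first term and weak $L^1$ convergence tested against the bounded function $b_j(\cdot,u)\mathbf 1_{A_j\cap E}$ for the second, the right-hand side converges to $\sum_j\int_{A_j\cap E}(a_j(x,u)+b_j(x,u)\cdot z)\,dx$. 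Letting $h\to\infty$ therefore gives $L\ge\int_E F(x,u,z)\,dx-\eps\,|E|$.

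Finally I would let $\eps\to0$ and then let $E$ increase to $\Om$ (up to a null set) through such good sets; since $F\ge0$, monotone convergence yields $L\ge\IO F(x,u,z)\,dx$, which is the desired inequality. I expect the main difficulty to lie in the two soft-analysis steps the above compresses: verifying that the subgradients can be chosen so that $a_k,b_k$ are genuinely Carath\'eodory (the joint measurability of the subdifferential multifunction), and handling, in the limit passage, the mismatch between the strong convergence of $u_h$ and the merely weak-$L^1$ convergence of $z_h$, which is what forces the careful localization onto sets where all the coefficients are uniformly bounded (Egorov, Lusin/Scorza--Dragoni, Dunford--Pettis, Vitali).
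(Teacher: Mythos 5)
The paper itself offers no proof of this statement: it is quoted as a background tool from Giusti [Gi, Theorem 4.5], so there is no internal argument to compare against, and the relevant comparison is with the classical proof in that source. Your sketch is essentially that classical De Giorgi-type argument (affine representation of $F$ with Carath\'eodory coefficients, truncation to finitely many affine pieces, Egorov/Lusin/Scorza--Dragoni localization onto a good compact set $E$, passage to the limit by testing the weak $L^1$ convergence of $z_h$ against bounded coefficients, then monotone convergence), and in outline it is correct. The one step stated too loosely is the representation lemma: a Castaing-type measurable selection $b_j(x,u)\in\partial_z F(x,u,q_j)$ gives joint measurability but not the continuity in $u$ that your limit passage $b_j(x,u_h)\to b_j(x,u)$ on $E$ actually requires; this Carath\'eodory affine representation is precisely the standard lemma preceding the semicontinuity theorem in the cited book (usually obtained via the Fenchel conjugate with measurable selections, or a Moreau--Yosida regularization in $z$, which does give continuous dependence on $u$), and since you flag the difficulty yourself and the lemma is standard, this is a gap in detail rather than in strategy. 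A second, purely bookkeeping point: for fixed $N$ the first $N$ affine pieces need not $\eps$-realize $F(x,u(x),z(x))$ on all of $\Om$, so the partition $A_1,\dots,A_N$ should be performed only on a set where $\max_{j\le N}$ is within $\eps$ of $F$ (Egorov applied to the monotone convergence of the truncated suprema), with the limits taken in the order $h\to\infty$, then $N\to\infty$ and $E\uparrow\Om$, exactly as in the standard proof.
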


\vspace{0.3cm}

\begin{cor}\label{C:lsc}
Let $\Om$ be an $X-(\eps,\delta)$ domain with $diam(\Om) < \frac{R_o}{2}$ (and $f \in L^{q'}(\Om,dx)$) .
The functional $J_p$ defined in \eqref{E:FunctionalJ} is (sequentially) lower semicontinuous on the spaces $\mathcal L^{1,q}(\Om,dx)$ for any $1\leq q < \infty$ with respect to the weak topology, that is, if $\{u_h\}_{h = 1}^\infty$ is a sequence in $\mathcal L^{1,q}(\Om,dx)$ such that $u_h \rightharpoonup u$ for some $u\in \mathcal L^{1,q}(\Om,dx)$ then

\[
J_p(u)\ \leq\ \liminf_{h\to\infty} J_p(u_h)\ .
\]
\end{cor}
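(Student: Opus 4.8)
The plan is to decompose $J_p$ into three pieces and treat each separately, using the compact embedding (Corollary \ref{C:strong-conv}) to upgrade weak convergence into something strong enough to handle the lower-order and boundary terms, and reserving the lower semicontinuity machinery of Theorem \ref{T:lsc} for the genuinely nonlinear gradient term. Concretely, write
\[
J_p(u)\ =\ \underbrace{\frac1p\int_\Om |Xu|^p\,dx}_{=:I(u)}\ +\ \underbrace{\int_\Om f\,u\,dx}_{=:L(u)}\ -\ \underbrace{<\nu,Tr(u)>}_{=:N(u)}\ ,
\]
and show $I$ is weakly sequentially lower semicontinuous while $L$ and $N$ are weakly sequentially continuous; the sum of a lsc functional and two continuous ones is lsc, which is the claim.

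\textbf{Step 1 (the gradient term).} Suppose $u_h\rightharpoonup u$ in $\mathcal L^{1,q}(\Om,dx)$. By Corollary \ref{C:strong-conv}, $u_h\to u$ in $L^q(\Om,dx)$, hence along a subsequence $u_h\to u$ a.e. and certainly $u_h\to u$ in $L^1_{loc}(\Om,dx)$. Weak convergence in $\mathcal L^{1,q}$ also gives $X u_h\rightharpoonup Xu$ weakly in $L^q(\Om,\R^m)$, hence weakly in $L^1_{loc}(\Om,\R^m)$. Now apply Theorem \ref{T:lsc} with $N=1$, $M=\R$, $l=m$, and $F(x,u,z)=\tfrac1p|z|^p$: this $F$ is continuous (hence Caratheodory), nonnegative, and convex in $z$ by the elementary convexity of $z\mapsto|z|^p$ already invoked in Lemma \ref{L:convex}. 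The conclusion is $I(u)\le\liminf_h I(u_h)$. One subtlety: Theorem \ref{T:lsc} is stated for a fixed sequence, so to get the $\liminf$ over the original sequence I would, as usual, pass to a subsequence realizing the $\liminf$ of $J_p(u_h)$, apply the above to that subsequence (along a further subsequence for the a.e.\ convergence), and note the value of $I(u)$ does not depend on which subsequence was chosen.

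\textbf{Step 2 (lower-order and boundary terms).} For $L$: since $u_h\to u$ in $L^q(\Om,dx)$ by Corollary \ref{C:strong-conv} and $f\in L^{q'}(\Om,dx)$ with $1/q+1/q'=1$, Hölder's inequality gives $\int_\Om f\,u_h\,dx\to\int_\Om f\,u\,dx$, so $L$ is weakly sequentially continuous. For $N$: the trace operator $Tr:\mathcal L^{1,q}(\Om,dx)\to B^q_{1-s/q}(\partial\Om,d\mu)$ of Theorem \ref{T:Sh_restriction_boundary} is bounded and linear, hence weak-to-weak continuous, so $Tr(u_h)\rightharpoonup Tr(u)$ weakly in $B^q_{1-s/q}(\partial\Om,d\mu)$; pairing against the fixed functional $\nu\in B^q_{1-s/q}(\partial\Om,d\mu)^*$ then yields $<\nu,Tr(u_h)>\to<\nu,Tr(u)>$, so $N$ is weakly sequentially continuous too.

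\textbf{Combining.} Given any sequence $u_h\rightharpoonup u$ in $\mathcal L^{1,q}(\Om,dx)$,
\[
J_p(u)\ =\ I(u)+L(u)-N(u)\ \le\ \liminf_h I(u_h)+\lim_h L(u_h)-\lim_h N(u_h)\ \le\ \liminf_h\bigl(I(u_h)+L(u_h)-N(u_h)\bigr)\ =\ \liminf_h J_p(u_h),
\]
using that adding convergent sequences does not decrease the $\liminf$. This holds for every $1\le q<\infty$ since the only inputs — Corollary \ref{C:strong-conv}, the trace theorem, Hölder, and convexity of $|z|^p$ — are available throughout that range. The main obstacle, such as it is, is the bookkeeping in Step 1: Theorem \ref{T:lsc} requires strong $L^1_{loc}$ convergence of the $u$-argument and weak $L^1_{loc}$ convergence of the $z$-argument, and one must be careful that extracting the a.e.\ convergent subsequence needed to feed the theorem does not lose the $\liminf$ over the original sequence; this is handled by the standard subsequence-of-subsequence argument together with the observation that the limiting value $J_p(u)$ is intrinsic to $u$.
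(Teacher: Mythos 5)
Your proof is correct and takes essentially the same route as the paper: split $J_p$ into the linear part (weakly continuous by H\"older's inequality and the trace estimate \eqref{E:Besov_ineq_boundary}) and the gradient term, and handle the latter by feeding $u_h\to u$ in $L^1$ (from Corollary \ref{C:strong-conv}) and $Xu_h\rightharpoonup Xu$ in $L^1_{loc}$ into Theorem \ref{T:lsc} with $F(x,u,z)=|z|^p$. The only difference is your subsequence-of-subsequences bookkeeping, which is actually unnecessary: Theorem \ref{T:lsc} requires only strong $L^1_{loc}$ convergence of $u_h$ and weak $L^1_{loc}$ convergence of $Xu_h$ (no a.e.\ convergence), so it applies directly to the full sequence.
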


\begin{proof}[\textbf{Proof}]
By assumption $u\in \mathcal L^{1,q}(\Om,dx), f\in L^{q'}(\Om,dx)$ and $\nu\in B^q_{1 - \frac{s}{q}}(\partial \Om,d\mu)^*$  the linear part of $J$ is bounded by H\"older's inequality and the trace inequality \eqref{E:Besov_ineq_boundary} therefore continuous.  The sum of two lower semicontinuous function is lower semicontinuous.  Hence, it suffices to establish the lower semicontinuity of the non-linear part of $J_p$, namely $I$ defined in \eqref{E:non-linear}.  Let $u, \{u_h\}_{h=1}^\infty$ be as in the hypothesis.  By Corollary \ref{C:strong-conv}  we have $u_h \to u$ in $L^1(\Om,dx)$ and also  $X_j u_h \rightharpoonup X_j u$ in $L^q_{loc}(\Om,dx)$ hence also in $L^1(\Om,dx)$ for $j=1,..m$.  Now
we apply Theorem \ref{T:lsc} with $N=1,M=\R,l = m$, $z = Xu = (X_1u,...,X_mu)$ and $F(x,u,z) = |z|^p$ to reach the conclusion.
\end{proof}

\vspace{0.3cm}

\begin{lemma}\label{L:min=sol}
Let  $1 \leq q < \infty$, $f \in L^{q'}(\Om,dx)$, $\Om$ be $X-(\epsilon,\delta)$ domain.  A function $u\in\mathcal L^{1,q}(\Om,dx)$ minimizes  the functional $J_p$ given by \eqref{E:FunctionalJ} if and only if it is a  variational solution to the Neumann problem \eqref{E:NP} in the sense of Definition \ref{D:NP}.
\end{lemma}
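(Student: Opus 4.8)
The statement is the classical equivalence between minimizers of a convex functional and solutions of its Euler--Lagrange equation, here in the weak (variational) formulation. Since $J_p$ is convex on $\mathcal L^{1,q}(\Om,dx)$ by Lemma \ref{L:convex}, the strategy is to show that $u$ minimizes $J_p$ if and only if the G\^ateaux derivative of $J_p$ at $u$ vanishes in every direction $\phi\in\mathcal L^{1,q}(\Om,dx)$, and then to identify that derivative with the left- minus right-hand side of \eqref{E:vNP}. Concretely, I would fix $u,\phi\in\mathcal L^{1,q}(\Om,dx)$ and $t\in\R$ and study the real function $g(t)=J_p(u+t\phi)$. The linear part of $J_p$ (the terms $\int_\Om f(u+t\phi)\,dx$ and $-\langle\nu,Tr(u+t\phi)\rangle$) is affine in $t$, so it contributes $t\bigl(\int_\Om f\phi\,dx-\langle\nu,Tr(\phi)\rangle\bigr)$ to $g(t)-g(0)$, using linearity of $Tr$ from Theorem \ref{T:Sh_restriction_boundary}. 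For the nonlinear part $I(u+t\phi)=\frac1p\int_\Om|Xu+tX\phi|^p\,dx$, I would differentiate under the integral sign: pointwise, $\frac{d}{dt}\frac1p|Xu+tX\phi|^p = |Xu+tX\phi|^{p-2}\langle Xu+tX\phi,X\phi\rangle$, and at $t=0$ this is $|Xu|^{p-2}\langle Xu,X\phi\rangle$. Hence formally $g'(0)=\int_\Om|Xu|^{p-2}\langle Xu,X\phi\rangle\,dx+\int_\Om f\phi\,dx-\langle\nu,Tr(\phi)\rangle$, and \eqref{E:vNP} is precisely $g'(0)=0$ for all $\phi$.

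Having computed $g'(0)$, the equivalence follows from convexity by a standard argument that I would spell out in both directions. If $u$ is a variational solution, then $g'(0)=0$; since $g$ is convex (being the restriction of the convex $J_p$ to a line), $g(t)\ge g(0)+g'(0)t=g(0)$ for all $t$, and more importantly for any $v\in\mathcal L^{1,q}(\Om,dx)$ one takes $\phi=v-u$ and uses convexity of $J_p$ directly: $J_p(v)=J_p(u+\phi)\ge J_p(u)+\tfrac{d}{dt}\big|_{t=0}J_p(u+t\phi)=J_p(u)$, so $u$ is a minimizer. Conversely, if $u$ minimizes $J_p$, then for every $\phi$ the function $g(t)=J_p(u+t\phi)$ has a minimum at $t=0$, so $g'(0)=0$, which is \eqref{E:vNP}. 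The only subtlety is that, for $q<p$, $J_p$ may take the value $+\infty$, as the excerpt already warns; I would handle this by noting that in the "variational solution $\Rightarrow$ minimizer" direction the inequality $J_p(v)\ge J_p(u)$ is trivially true whenever $J_p(v)=+\infty$, and in the "minimizer $\Rightarrow$ variational solution" direction a minimizer necessarily has $J_p(u)<\infty$, hence $\int_\Om|Xu|^p\,dx<\infty$, and for $\phi$ with $J_p(u+\phi)=+\infty$ the Euler--Lagrange identity is vacuous there while for the remaining $\phi$ the one-variable argument applies; in fact one restricts attention to $\phi\in C^\infty(\overline\Om)\cap\mathcal L^{1,q}$ or to $\phi$ with $X\phi$ bounded and then passes to the limit, which is routine once the dominating function is in place.

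The one genuinely technical point is the justification of differentiation under the integral sign for $I$, i.e.\ producing an integrable dominating function for the difference quotients $\frac1t\bigl(\tfrac1p|Xu+tX\phi|^p-\tfrac1p|Xu|^p\bigr)$ uniformly for small $t$. I would use the elementary inequality $\bigl|\,|a|^p-|b|^p\,\bigr|\le p\,(|a|+|b|)^{p-1}|a-b|$ for $a,b\in\R^m$, which gives a difference-quotient bound $\le (|Xu|+|X\phi|)^{p-1}|X\phi|$ for $|t|\le 1$; by Young's inequality this is controlled by a multiple of $|Xu|^p+|X\phi|^p$, which is in $L^1(\Om)$ provided both $Xu,X\phi\in L^p(\Om)$. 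When $q=p$ this holds automatically; when $q<p$ one first establishes \eqref{E:vNP} for test functions $\phi$ with $X\phi\in L^\infty$ (a dense class inside $\mathcal L^{1,q}$ once one also uses the trace and H\"older bounds on the linear part) and then invokes density together with the continuity of each term of \eqref{E:vNP} in $\phi$ — the map $\phi\mapsto\int_\Om|Xu|^{p-2}\langle Xu,X\phi\rangle\,dx$ is a bounded linear functional of $X\phi$ because $|Xu|^{p-1}\in L^{p'}(\Om)$ with $p'=p/(p-1)$, and $|Xu|^{p-1}\in L^{q'}$ as well since $q\le p$. This density/continuity step is where I expect to spend the most care; the rest is bookkeeping around the convexity argument above. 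Dominated convergence then yields $g'(0)$ in the stated form, completing the proof.
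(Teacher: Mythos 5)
Your proposal follows essentially the same route as the paper. The ``minimizer $\Rightarrow$ solution'' direction is the first-variation computation: the paper sets $\psi(\epsilon)=J_p(u+\epsilon\phi)$, differentiates under the integral sign and uses $\psi'(0)=0$, exactly your computation of $g'(0)$. For the converse, the paper inserts $\phi=u-v$ into \eqref{E:vNP} and integrates the pointwise gradient inequality \eqref{E:p-convex} for $z\mapsto\frac1p|z|^p$; this is precisely the integrated form of your ``$g$ is convex along the line, so $J_p(v)\ge J_p(u)+\frac{d}{dt}\big|_{t=0}J_p(u+t(v-u))$'' argument. Your additional justification of the differentiation under the integral sign, via $\bigl|\,|a|^p-|b|^p\,\bigr|\le p(|a|+|b|)^{p-1}|a-b|$ and dominated convergence, is care that the paper simply omits, and it is correct once $Xu, X\phi\in L^p(\Om,dx)$ (a minimizer does have $Xu\in L^p$, by comparison with $v=0$, as you note).

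One concrete slip occurs in the part that goes beyond the paper, namely your density patch for $q<p$: you claim $|Xu|^{p-1}\in L^{q'}(\Om,dx)$ ``as well since $q\le p$''. This is backwards. Since $q\le p$ one has $q'\ge p'$, so on a bounded domain $L^{q'}(\Om,dx)\subset L^{p'}(\Om,dx)$, and $Xu\in L^p$ only gives $|Xu|^{p-1}\in L^{p'}$; membership in $L^{q'}$ would require the higher integrability $Xu\in L^{(p-1)q'}$ with $(p-1)q'\ge p$, which is not available. Hence the continuity in $\phi$ (with respect to the $\mathcal L^{1,q}$ norm) of the term $\phi\mapsto\int_\Om|Xu|^{p-2}\langle Xu,X\phi\rangle\,dx$ does not follow, and your plan of first proving \eqref{E:vNP} for $\phi$ with $X\phi\in L^\infty$ and then passing to general $\phi\in\mathcal L^{1,q}$ does not close as written when $q<p$. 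To be fair, the same integrability issue is implicit in Definition \ref{D:NP} itself for $q<p$ and the paper's proof sidesteps it entirely by differentiating formally, so your treatment is not less rigorous than the paper's; but the quoted claim is incorrect, and for $q<p$ the extension to all test functions in $\mathcal L^{1,q}$ needs either a different argument or an explicit restriction (for $q=p$ everything you wrote goes through).
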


\begin{proof}[\textbf{Proof}]
Let $u \in\mathcal L^{1,q}(\Om,dx)$ be a minimizer of $J_p$.  We have for any $\epsilon > 0$ and any $\phi \in\mathcal L^{1,q}(\Om,dx)$ the function $\psi(\epsilon) = J_p(u + \epsilon\,\phi)$ reaches a local minimum at $\epsilon = 0$. Therefore $\psi'(0) = 0$.  On the other hand, differentiating under the integral sign yields

\[
\psi'(\epsilon)\ =\ \int_\Om |Xu +\epsilon\,X\phi|^{p-2}<Xu+\epsilon\,X\phi,X\phi>\ +\ f(x)\phi(x) \,dx\ -\ <\nu,\phi>\ .
\]
Hence

\[
0\ =\ \psi'(0)\ =\ \int_\Om |Xu|^{p-2}<Xu,X\phi>\ +\ f(x)\phi(x)\, dx\ -\ <\nu,\phi>\ ,
\]
which is \eqref{E:vNP} in Definition \ref{D:NP}.

We now prove the converse.  Let $u\in \mathcal L^{1,q}(\Om,dx)$ be a variational solution in the sense of Definition \ref{D:NP}.  For any $v\in \mathcal L^{1,q}(\Om,dx)$ we let $\phi = u - v \in \mathcal L^{1,q}(\Om,dx)$ in \eqref{E:vNP}, adding and subtracting the terms $\int_\Om \frac{1}{p'} |Xu|^p\,dx$, $\int_\Om \frac{1}{p} |Xv|^p\,dx$ we obtain the identity

\begin{align*}
J_p(u) & \ =\ J_p(v)\ -\ \int_\Om \frac{1}{p}|Xv|^p + \frac{1}{p'}|Xu|^p - |Xu|^{p-2}<Xu,Xv>\,dx \\
&\ =\
J_p(v)\ -\ \int_\Om \frac{1}{p} |Xv|^p - \frac{1}{p} |Xu|^p - |Xu|^{p-2}<Xu,Xv - Xu>\,dx \ .
\end{align*}
To show that the last term in the above is non-negative, we recall that the convexity of the function $g(z) = |z|^p$ for $p \geq 1$ can be rephrase as: For every $z,w \in \mathbb R^m$, $z\neq 0$ then

\begin{equation}\label{E:p-convex}
\frac{1}{p}\, |w|^p\ \geq\ \frac{1}{p} |z|^p \ +\ |z|^{p-2}\,\sum_{i=1}^m z_i\,(w_i - z_i)\ .
\end{equation}

Letting $z_i = X_iu$ and $w_i=X_iv$ in the inequality \eqref{E:p-convex} and integrating we have

\[
\int_\Om \frac{1}{p} |Xv|^p\ -\ \frac{1}{p} |Xu|^p\ -\ |Xu|^{p-2}\,<Xu,Xv - Xu>\,dx \ \geq\ 0\ .
\]
Hence, we have $J_p(u) \leq J_p(v)$ and this completes the proof.
\end{proof}

Our final result leading to the existence of minimizers of $J_p$ (hence, variational solutions to the Neumann problem in $\mathcal L^{1,q}(\Om,dx)$) is the following

\vspace{0.3cm}

\begin{lemma}\label{L:coercivity}
Let $\Om$ be an $X-(\epsilon,\delta)$ domain.  For any $1\leq q \leq  p$ and $f \in L^{q'}(\Om,dx)$, the functional $J_p$ is coercive in $\tilde{\mathcal L}^{1,q}(\Om,dx)$, that is, if $\{u_h\} $ is a sequence in $\tilde{\mathcal L}^{1,q}(\Om,dx)$ such that $\lim_{h\to\infty} \|u_h\|_{\tilde{\mathcal L}^{1,q}(\Om,dx)} =
\lim_{h\to\infty} \|Xu_h\|_{L^q(\Om,dx)} = \infty$ then $\lim_{h\to\infty} J_p(u_h) = \infty$.
\end{lemma}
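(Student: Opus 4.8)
The plan is to bound $J_p(u_h)$ from below by a coercive expression in $\|Xu_h\|_{L^q(\Om,dx)}$. Write $J_p(u_h) = \tfrac{1}{p}\|Xu_h\|_{L^p(\Om,dx)}^p + \int_\Om f\,u_h\,dx - \langle\nu, Tr(u_h)\rangle$. The two linear terms must be controlled by something that grows slower than the $L^p$-term of the gradient. First I would estimate the forcing term: by H\"older's inequality $|\int_\Om f\,u_h\,dx| \le \|f\|_{L^{q'}(\Om,dx)}\|u_h\|_{L^q(\Om,dx)}$, and since $u_h\in\tilde{\mathcal L}^{1,q}(\Om,dx)$ has zero average, the Poincar\'e--Sobolev inequality (Theorem \ref{T:Poincare-Sobolev}) gives $\|u_h\|_{L^q(\Om,dx)} \le C\,diam(\Om)\,\|Xu_h\|_{L^q(\Om,dx)}$; hence this term is bounded in absolute value by $C_1\|Xu_h\|_{L^q(\Om,dx)}$. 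Next, the boundary term: by the trace inequality \eqref{E:Besov_ineq_boundary} and the definition of the dual norm, $|\langle\nu, Tr(u_h)\rangle| \le \|\nu\|_{B^q_{1-s/q}(\partial\Om,d\mu)^*}\|Tr(u_h)\|_{B^q_{1-s/q}(\partial\Om,d\mu)} \le C\|\nu\|\,\|u_h\|_{\mathcal L^{1,q}(\Om,dx)}$. Again using the zero-average Poincar\'e inequality to absorb the $L^q$-part of $\|u_h\|_{\mathcal L^{1,q}}$ into the gradient part, this is bounded by $C_2\|Xu_h\|_{L^q(\Om,dx)}$.

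Combining, $J_p(u_h) \ge \tfrac{1}{p}\|Xu_h\|_{L^p(\Om,dx)}^p - C_3\,\|Xu_h\|_{L^q(\Om,dx)}$, where $C_3 = C_1+C_2$ depends on $\Om$, $p$, $q$, $s$, $M$, $\|f\|_{L^{q'}}$ and $\|\nu\|$ but not on $h$. Now comes the step that forces the restriction $q\le p$: since $diam(\Om)<R_o/2<\infty$ and in particular $|\Om|<\infty$, and since $q\le p$, H\"older's inequality on the finite-measure space $\Om$ gives $\|Xu_h\|_{L^q(\Om,dx)} \le |\Om|^{\frac1q-\frac1p}\,\|Xu_h\|_{L^p(\Om,dx)}$. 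Setting $t_h := \|Xu_h\|_{L^p(\Om,dx)}$, which by hypothesis together with this comparison satisfies $t_h\to\infty$ (since $\|Xu_h\|_{L^q}\to\infty$ and the $L^q$-norm is dominated by a constant times the $L^p$-norm), we obtain
\[
J_p(u_h)\ \ge\ \frac{1}{p}\,t_h^{\,p}\ -\ C_4\,t_h\ =\ t_h\Bigl(\frac{1}{p}\,t_h^{\,p-1}\ -\ C_4\Bigr)\ \xrightarrow[h\to\infty]{}\ \infty,
\]
using $p>1$ so that the exponent $p-1$ is positive. This proves coercivity on $\tilde{\mathcal L}^{1,q}(\Om,dx)$.

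A minor technical point worth spelling out: the hypothesis is stated in terms of $\|Xu_h\|_{L^q(\Om,dx)}\to\infty$, and I should note explicitly that on a finite-measure domain the $L^q$-norm is controlled by the $L^p$-norm (for $q\le p$), so $\|Xu_h\|_{L^p}\to\infty$ as well; the reverse implication is what makes the displayed estimate conclude. The main obstacle — really the only subtle place — is precisely this interplay: the gained coercive term lives at exponent $p$ in the $L^p$-norm of the gradient, whereas the Poincar\'e inequality and the trace estimate naturally produce losses measured in the $L^q$-norm of the gradient, and one needs $q\le p$ together with $|\Om|<\infty$ to reconcile the two. Everything else (H\"older, Poincar\'e--Sobolev from Theorem \ref{T:Poincare-Sobolev}, the trace bound \eqref{E:Besov_ineq_boundary}, the equivalence of norms on $\tilde{\mathcal L}^{1,q}(\Om,dx)$ from Remark \ref{R:important}) is routine bookkeeping. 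Note also that the case $q=p$ is the cleanest, requiring no H\"older step at all, which is consistent with the remark in the introduction that the general functional is well-behaved precisely when $q=p$.
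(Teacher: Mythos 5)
Your proof is correct and follows essentially the same route as the paper: control the linear part of $J_p$ via the trace theorem, H\"older and Poincar\'e, compare $\|Xu_h\|_{L^q(\Om,dx)}$ with $\|Xu_h\|_{L^p(\Om,dx)}$ by H\"older on the finite-measure domain, and conclude from the superlinear growth of $\tfrac{1}{p}t^p$ for $p>1$. If anything, you carry out the linear-growth bookkeeping for the linear part more explicitly than the paper, which simply notes that this part is ``bounded'' (i.e.\ a bounded linear functional) and reduces at once to the nonlinear term $I$.
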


\begin{proof}[\textbf{Proof}]
Observe that due to the Trace theorem \ref{T:Sh_restriction_boundary} and the fact that $f\in L^{q'}(\Om,dx)$, the linear part of $J_p$ is bounded.  Therefore, it suffices to establish the Lemma for the non-linear part of $J_p$, namely $I$ given by \eqref{E:non-linear}.
Let $\{u_h\} $ be a sequence in $\tilde{\mathcal L}^{1,q}(\Om,dx)$ such that
$\lim_{h\to\infty} \|Xu_h\|_{L^q(\Om,dx)} = \infty$.  By H\"older's inequality, we have for $1 \leq q \leq p$ that $\|Xu_h\|_{L^q(\Om,dx)}\leq |\Om|^{\frac{1}{q} - \frac{1}{p}}\|Xu_h\|_{L^p(\Om,dx)}$ hence $\lim_{h\to\infty} I(u_h) = \lim_{h\to\infty} \|Xu_h\|_{L^p(\Om,dx)} = \infty$ as well.
\end{proof}

We now come to our main result which follows from a standard line of argument of the Calculus of Variations, Lemma \ref{L:coercivity} and Corollary \ref{C:lsc}.  We include the proof for the sake of completeness.

\vspace{0.3cm}

\begin{thrm}\label{E:existence_minimizers}
Let $1<q \leq p$, $\Om$ be an $X-(\epsilon,\delta)$ domain with $diam(\Om) < R_o/2$, $|\Om| > 0$, $f\in L^{q'}(\Om,dx)$, $\nu\in B^q_{1 - \frac{s}{q}}(\partial \Om,d\mu)^*$ satisfying the compatibility condition \eqref{E:Compatibility}.  The functional $J_p$ given by \eqref{E:FunctionalJ} has a unique minimizer in $\mathcal L^{1,q}(\Om,dx)$.  
\end{thrm}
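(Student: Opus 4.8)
The plan is to run the direct method of the Calculus of Variations in the subspace $\tilde{\mathcal L}^{1,q}(\Om,dx)$ and then transfer the conclusion to the full space $\mathcal L^{1,q}(\Om,dx)$ using the translation-invariance observed in Remark \ref{R:Compatibility}(ii). First I would set $m_o = \inf\{J_p(u) : u \in \tilde{\mathcal L}^{1,q}(\Om,dx)\}$ and note that $m_o < \infty$ by testing with $u \equiv 0$ (which lies in $\tilde{\mathcal L}^{1,q}$), while $m_o > -\infty$ follows from coercivity (Lemma \ref{L:coercivity}): if the infimum were $-\infty$ there would be a sequence $\{u_h\}$ with $J_p(u_h) \to -\infty$, and by coercivity $\|Xu_h\|_{L^q}$ must stay bounded along such a sequence (else $J_p(u_h) \to +\infty$), but then $J_p(u_h)$ is bounded below — I would make this precise by first showing coercivity forces boundedness of any minimizing sequence.

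Next I would take a minimizing sequence $\{u_h\} \subset \tilde{\mathcal L}^{1,q}(\Om,dx)$, i.e. $J_p(u_h) \to m_o$. By Lemma \ref{L:coercivity} (contrapositive), $\{\|Xu_h\|_{L^q(\Om,dx)}\}$ is bounded; by Remark \ref{R:important} this is an equivalent norm on $\tilde{\mathcal L}^{1,q}(\Om,dx)$, so $\{u_h\}$ is bounded in $\mathcal L^{1,q}(\Om,dx)$. Since $1 < q < \infty$, the space $\mathcal L^{1,q}(\Om,dx)$ is reflexive, so a subsequence (not relabeled) satisfies $u_h \rightharpoonup u$ weakly for some $u \in \mathcal L^{1,q}(\Om,dx)$. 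The subspace $\tilde{\mathcal L}^{1,q}(\Om,dx)$ is closed and convex, hence weakly closed, so $u \in \tilde{\mathcal L}^{1,q}(\Om,dx)$. By Corollary \ref{C:lsc} (weak sequential lower semicontinuity of $J_p$, valid for all $1 \leq q < \infty$), $J_p(u) \leq \liminf_{h\to\infty} J_p(u_h) = m_o$, and since $u \in \tilde{\mathcal L}^{1,q}(\Om,dx)$ we also have $J_p(u) \geq m_o$; therefore $J_p(u) = m_o$ and $u$ is a minimizer in $\tilde{\mathcal L}^{1,q}(\Om,dx)$.

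For uniqueness in $\tilde{\mathcal L}^{1,q}(\Om,dx)$ I would invoke strict convexity: Lemma \ref{L:convex} gives that $J_p$ is strictly convex on $\tilde{\mathcal L}^{1,q}(\Om,dx)$ under the hypotheses $|\Om| > 0$ and $p > 1$ (both in force here), and a strictly convex functional has at most one minimizer — if $u_1 \neq u_2$ both minimized $J_p$, then $J_p(\tfrac12 u_1 + \tfrac12 u_2) < \tfrac12 J_p(u_1) + \tfrac12 J_p(u_2) = m_o$, contradicting minimality (note $\tfrac12 u_1 + \tfrac12 u_2 \in \tilde{\mathcal L}^{1,q}(\Om,dx)$ by convexity of the subspace). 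Finally, to get a minimizer in the full space $\mathcal L^{1,q}(\Om,dx)$: by Remark \ref{R:Compatibility}(ii), for any $v \in \mathcal L^{1,q}(\Om,dx)$ we have $J_p(v) = J_p(v - v_\Om) \geq J_p(u) = m_o$ since $v - v_\Om \in \tilde{\mathcal L}^{1,q}(\Om,dx)$, so $u$ minimizes $J_p$ over $\mathcal L^{1,q}(\Om,dx)$ as well. The minimizer in the full space is of course only unique up to additive constants, so the statement's "unique minimizer in $\mathcal L^{1,q}(\Om,dx)$" should be read in $\tilde{\mathcal L}^{1,q}$ (or modulo constants); I would state it that way.

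I expect no serious obstacle — the argument is the textbook direct method — but the one point requiring care is reflexivity/weak-compactness of $\mathcal L^{1,q}(\Om,dx)$ for $1 < q < \infty$ (hence the restriction $q > 1$, as opposed to $q \geq 1$ which suffices for lower semicontinuity and coercivity), and the verification that $\tilde{\mathcal L}^{1,q}(\Om,dx)$ is weakly closed. The former follows because $\mathcal L^{1,q}$ embeds isometrically into $L^q(\Om,dx) \times L^q(\Om,dx)^m$ via $u \mapsto (u, Xu)$, a closed subspace of a reflexive space; the latter because $u \mapsto \int_\Om u\,dx$ is a bounded linear functional on $\mathcal L^{1,q}(\Om,dx)$ (it is dominated by the $L^q$ norm, which is part of the $\mathcal L^{1,q}$ norm), so its kernel $\tilde{\mathcal L}^{1,q}(\Om,dx)$ is closed and, being a subspace, weakly closed.
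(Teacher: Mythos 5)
Your proposal is correct and follows essentially the same route as the paper's proof: the direct method in $\tilde{\mathcal L}^{1,q}(\Om,dx)$ using coercivity (Lemma \ref{L:coercivity}) to bound the minimizing sequence, weak compactness for $q>1$, lower semicontinuity (Corollary \ref{C:lsc}), strict convexity (Lemma \ref{L:convex}) for uniqueness, and Remark \ref{R:Compatibility} to pass to the full space. You merely fill in details the paper leaves implicit (reflexivity of $\mathcal L^{1,q}$ via the embedding $u\mapsto(u,Xu)$, weak closedness of the mean-zero subspace, and the finiteness of the infimum, which the paper gets more directly from boundedness of the linear part plus non-negativity of $I$), and your caveat that uniqueness in $\mathcal L^{1,q}(\Om,dx)$ holds only modulo additive constants is consistent with the paper's own Remark \ref{R:Compatibility}(ii).
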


\begin{proof}[\textbf{Proof}]
Let $l = inf\{J_p(u)\,|\, u \in \tilde{\mathcal L}^{1,q}(\Om,dx)\} > -\infty$ since the linear part of $J_p$ is bounded and the non-linear part $I$ given by \eqref{E:non-linear} is non-negative.  Let $\{u_h\} \subset \tilde{\mathcal L}^{1,q}(\Om,dx)$ be a minimizing sequence, that is $\lim_{h\to \infty} J_p(u_h) = l$.  Clearly, $J_p(u_h)$ is bounded and hence Lemma \ref{L:coercivity} implies that $\{u_h\}$ is a bounded sequence in $\tilde{\mathcal L}^{1,q}(\Om,dx)$.  Since for $q > 1$, the balls in $\tilde{\mathcal L}^{1,q}(\Om,dx)$ are weakly compact, $\{u_h\}$ contains a subsequence (still denoted by $\{u_h\}$) and $u\in \tilde{\mathcal L}^{1,q}(\Om,dx)$ such that $u_h \rightharpoonup u_o$.  Corollary \ref{C:lsc} then imply

\[
l\ \leq\ J_p(u_o)\ \leq\ \liminf_{h\to\infty} J_p(u_h)\  = \lim_{h\to\infty} u_h\ =\ l\ ,
\]
hence $J_p(u_o) = l$.  Now from Remark \ref{R:Compatibility}, $u_o$ remains to be a minimizer of $J_p$ in $\mathcal L^{1,q}(\Om,dx)$.
\end{proof}

Finally, we now come to the proof of our main result.

\begin{proof}[\textbf{Proof of Theorem \ref{T:main}}]
Theorem \ref{E:existence_minimizers} and Lemma \ref{L:min=sol} yield a unique solution $u\in \tilde{\mathcal L}^{1,q}(\Om,dx)$ in the sense of Definition \ref{D:NP}.  To establish the estimate \eqref{E:basic_estimate} we take $\phi = u$ in Definition \ref{D:NP} and recalling Remark \ref{R:important}  we have

\begin{align*}
\|Xu\|^p_{L^p(\Om,dx)}
&\ =\
\int_\Om |Xu|^{p-2}\,<Xu,Xu>\,dx \\
&\ \leq\
|<\nu,tr(u)>| \ +\ \left|\int_\Om f\,u\,dx\right|\\
&\ \leq\
\|\nu\|_{B^q_{1-s/q}(\partial \Om,d\mu)^*} \, \|tr(u)\|_{B^q_{1-s/q}(\partial\Om,d\mu)} \\
&\qquad \ +\ \|f\|_{L^{q'}(\Om,dx)}\,
\|u\|_{L^q(\Om,dx)} \\
\text{(by Theorems \ref{T:Sh_restriction_boundary},\ref{T:Poincare-Sobolev})}
&\ \leq\
C\,\|\nu\|_{B^q_{1-s/q}(\partial \Om,d\mu)^*} \, \|u\|_{ \tilde{\mathcal L}^{1,q}(\Om,dx)} \\
&\qquad
\ +\ \|f\|_{L^{q'}(\Om,dx)}\,\|Xu\|_{L^q(\Om,dx)} \\
\text{(by Remark \ref{R:important})}
&\ \leq\
C\,(\|\nu\|_{B^q_{1-s/q}(\partial \Om,d\mu)^*} 
\ +\ \|f\|_{L^{q'}(\Om,dx)})\,\|Xu\|_{L^q(\Om,dx)} \\
\text{(since $q \leq p$)}
&\ \leq\
C\,(\|\nu\|_{B^q_{1-s/q}(\partial \Om,d\mu)^*} 
\ +\ \|f\|_{L^{q'}(\Om,dx)})\,\|Xu\|_{L^p(\Om,dx)} \\
\end{align*}
and therefore

\[
\|Xu\|_{L^p(\Om,dx)}\ \leq\ C\,(\|\nu\|_{B^q_{1-s/q}(\partial \Om,d\mu)^*}\ +\ \|f\|_{L^{q'}(\Om,dx)})^\frac{1}{p-1}\ .
\]

In view of Remark \ref{R:important} again this shows $u \in \tilde{\mathcal L}^{1,p}(\Om,dx)$ and the proof is now complete.
\end{proof}

\vspace{0.3cm}

\begin{rmrk}
Some intermediate results such as Lemmas \ref{L:convex}, \ref{L:min=sol} and Corollary \ref{C:lsc} hold for any $q \geq 1$.  The limitation of $1< q \leq p$ comes in Lemma \ref{L:coercivity} and therefore also in Theorem \ref{T:main}.  The case for $q < p$ can also be infered from the case $q=p$ due to the containment of the spaces $B^q_{1-s/q}(\partial\Omega,d\mu)^* \subset B^p_{1-s/p}(\partial\Omega,d\mu)^*$ and $\tilde{\mathcal L}^{1,p}(\Omega,dx)\subset \tilde{\mathcal L}^{1,q}(\Omega,dx)$.
\end{rmrk}

\vspace{1cm}


\begin{thebibliography}{99}

\bibitem[Ba]{Ba}
M. S. Baouendi,
\emph{Sur une classe d’op\'etrateurs elliptiques d\'eg\'en\'er\'es}, Bull. Soc. Math. Fr., \textbf{95}~(1967), 45-87.

\bibitem[B]{B}
J. M. Bony, 
\emph{Principe du maximum, in\'egalit\'e de Harnack et unicit\'e du probl\`eme de Cauchy pour les operateurs elliptique degeneres}, 
Ann. Inst. Fourier, Grenoble (1) \textbf{119}~(1969), 277–304.

\bibitem[CG]{CG}
L. Capogna \& N. Garofalo, \emph{Boundary behavior of non-negative solutions of subelliptic equations in NTA domains for Carnot-Carath\'eodory metrics}, Journal of Fourier Anal. and Appl.,
4-5, \textbf{4}~(1998), 403-432.

\bibitem[CGN1]{CGN1}
L. Capogna, N. Garofalo \& D. M. Nhieu, 
\emph{Properties of harmonic measures in the Dirichlet problem for nilpotent Lie groups of Heisenberg type}, Amer. J. Math., \textbf{124}~(2002), no. 2, 273–306.

\bibitem[CGN2]{CGN2}
\bysame
\emph{Mutual absolute continuity of harmonic and surface measures for
Hörmander type operators}, Perspectives in partial differential equations, harmonic analysis and applications, Proc.
Sympos. Pure Math., \textbf{79}~(2008), 49–100.

\bibitem[CT]{CT}
L. Capogna, P. Tang,
\emph{Uniform domains and quasiconformal mappings on the Heisenberg group},
Manuscripta Math., \textbf{86}~(1995), no. 3, 267–281. 


\bibitem[Ci]{Ci}
G. Citti, \emph{Wiener estimates at boundary points for Hörmander’s operators},
Boll. Un. Mat. Ital. 2-B, (1988), 667–681.

\bibitem[Chw]{Chw}
W. L. Chow,
\emph{\"Uber Systeme von linearen partiellen Differentialgleichungen erster Ordnung}, 
Math. Ann., \textbf{117}~(1939), 98-105.

\bibitem[D1]{D1}
 E. Damek, 
\emph{Harmonic functions on semidirect extensions of type H nilpotent groups}, 
Trans. Amer. Math. Soc., \textbf{290}~ (1985), 375–384.

\bibitem[D2]{D2}
\bysame
\emph{A Poisson kernel on Heisenberg type nilpotent groups}, Colloq. Math., \textbf{53}~ (1987), 239–247.

\bibitem[Da]{Da}
D. Danielli, 
\emph{Regularity at the boundary for solutions of nonlinear subelliptic equations}, Indiana Univ. Math. J., \textbf{44 }~(1995), 269–286.

\bibitem[DGMN]{DGMN}
D. Danielli, N. Garofalo, I. Munive \& D. M. Nhieu,
\emph{The Neumann problem on the Heisenberg group, regularity and boundary behaviour of solutions}, in preparation.

\bibitem[De]{De}
M. Derridj, 
\emph{Un probleme aux limites pour une classe d’op\'erateurs hypeolliptiques du second ordre}, Ann. Inst. Fourier (Grenoble) 21, \textbf{4}~(1971), 99-148.

\bibitem[GNg]{GNg}
 N. Garofalo \& C. P. Nguyen,
\emph{Boundary behavior of p-harmonic functions
in the Heisenberg group}, Math. Ann. \textbf{351}~(2011), 587–632.

\bibitem[GV]{GV}
 N. Garofalo \& D. Vassilev,
\emph{Regularity near the characteristic set in the non-linear Dirichlet problem and conformal geometry of sub-Laplacians on Carnot groups}, Math. Ann. \textbf{318}~(2000), 453-516.

\bibitem[GN]{GN}
  N. Garofalo \& D.M. Nhieu,
  \emph{Isoperimetric and Sobolev inequalities for Carnot-Carath\'eodory spaces and the existence of minimal surfaces}, Comm. Pure Appl. Math., \textbf{49}~(1996), 1081-1144.

\bibitem[G]{G}
B. Gaveau,
\emph{Principe de moindre action, propagation de la chaleur et estimees sous elliptiques sur certain groupes nilpotents}, Acta Math. \textbf{139}~(1977), 95–153.

\bibitem[DGN]{DGN}
D. Donatella, N. Garofalo and D.M. Nhieu,
\emph{Non-doubling Ahlfors measures, perimeter measures, and the characterization of the trace spaces of Sobolev functions in Carnot-Carath\'eodory spaces},
 Mem. Amer. Math. Soc.,\textbf{182},~(2006), no. 857.
(1998).

\bibitem[Gi]{Gi}
E. Giusti,
\emph{Direct methods in the calculus of variations}. World Scientific Publishing Co., Inc., River Edge, NJ, 2003.  ISBN: 981-238-043-4.

\bibitem[HH]{HH} 
W. Hansen \& H. Hueber,
\emph{The Dirichlet Problem for Sublaplacians on Nilpotent Lie Groups--Geometric Criteria for Regularity}, Math. Ann. \textbf{276}~(1987), 537-547.


\bibitem[H]{H} 
L. H\"{o}rmander,
\emph{Hypoelliptic second order differential equations}, 
Acta Math., \textbf{119}~(1967), 147-171.

\bibitem[J1]{J1}
D. S. Jerison, 
\emph{The Dirichlet problem for the Kohn Laplacian on the Heisenberg group. I}, 
J. Funct. Anal., \textbf{43}~(1981), no. 1, 97-142.


\bibitem[J2]{J2}
\bysame
\emph{The Dirichlet problem for the Kohn Laplacian on the Heisenberg group. II},
 J. Funct. Anal., \textbf{43}~(1981), no. 2, 224-257. 

\bibitem[JK]{JK}
D. Jerison \& C .Kenig, 
\emph{Boundary behavior of harmonic functions in non-tangentially accessible domains},  Adv. in Math., \textbf{46}~(1982), 80-147.



\bibitem[Jo]{Jo}
P. Jones, \emph{Quasiconformal mappings and extendability of functions in Sobolev spaces}, Acta Math., \textbf{147}~(1981), no. 1-2, 71–88.

\bibitem[KN]{KN}
J. J. Kohn \& L. Nirenberg, 
\emph{Non-coercive boundary value problems}, Comm. Pure and Appl. Math. 
\textbf{18}~(1965), 443–492.



\bibitem[NSW]{NSW}
A. Nagel, E. M. Stein \& S. Wainger, 
\emph{Balls and metrics defined by vector fields I: basic properties}, 
Acta Math., \textbf{155}~(1985), 103-147.

\bibitem[N]{N}
D. M. Nhieu,
\emph{The Neumann problem for sub-Laplacians on Carnot groups and the extension theorem for Sobolev spaces}, 
Ann. Mat. Pura Appl. (4) \textbf{180}~(2001), no. 1, 1–25.

\bibitem[OL1]{OL1}
O. A. OLEINIK, 
\emph{On the smoothness of solutions of degenerate elliptic and parabolic equations}, Dokl. Akad. Nauk SSSR, \textbf{163}~(1965), 577-580. [Russian]

\bibitem[OL2]{OL2}
\bysame
\emph{On linear second order equations with non-negative characteristic form}, Mat. Sb., \textbf{69}~(1966), 11 I-140. [Russian]

\bibitem[Rs]{Rs}
P. K. Rashevsky, 
\emph{Any two points of a totally nonholonomic space may be connected by an admissible line}, 
Uch. Zap. Ped. Inst. im. Liebknechta, Ser. Phys. Math., (Russian) \textbf{2}~(1938), 83-94.

\bibitem[Roc]{Roc}
R. T. Rockafellar,
\emph{Convex analysis}. Princeton Mathematical Series, No. 28 Princeton University Press, Princeton, N.J. 1970.

\bibitem[UL]{UL}
F. Uguzzoni, E. Lanconelli
\emph{On the Poisson kernel for the Kohn Laplacian}. Rend. Mat. Appl. (7) \textbf{17}~(1997), no. 4, 659–677.


\end{thebibliography}
\end{document}